\newtheorem{theorem}{Theorem}[section]
\newtheorem{lemma}[theorem]{Lemma}
\newtheorem{corollary}[theorem]{Corollary}
\newtheorem{conjecture}[theorem]{Conjecture}
\def\det{{\rm det}}
\title{{\Large \bf  The biharmonic index of connected graphs
\thanks{ Supported by the National Natural Science Foundation of China (No. 12071411).}~}}
\author{Zhen Lin\thanks{Corresponding author. E-mail addresses: lnlinzhen@163.com(Z. Lin).}\\
{\footnotesize School of Mathematics and Statistics,
Qinghai Normal University,}\\ {\footnotesize  Xining, 810008, Qinghai, P.R. China}
 }
\date{}
\begin{document}
\openup 1.0\jot
\date{}\maketitle
\begin{abstract}
Let $G$ be a simple connected graph with the vertex set $V(G)$ and $d_{B}^2(u,v)$ be the biharmonic distance between two vertices $u$ and $v$ in $G$. The biharmonic index $BH(G)$ of $G$ is defined as
$$BH(G)=\frac{1}{2}\sum\limits_{u\in V(G)}\sum\limits_{v\in V(G)}d_{B}^2(u,v)=n\sum\limits_{i=2}^{n}\frac{1}{\lambda_i^2(G)},$$
where $\lambda_i(G)$ is the $i$-th smallest eigenvalue of the Laplacian matrix of $G$ with $n$ vertices.
In this paper, we provide the mathematical relationships between the biharmonic index and some classic topological indices: the first Zagreb index, the forgotten topological index and the Kirchhoff index. In addition, the extremal value on the biharmonic index for trees and firefly graphs of fixed order are given. Finally, some graph operations on the biharmonic index are presented.

\bigskip

\noindent {\bf Mathematics Subject Classification:} 05C05; 05C09; 05C35; 05C50; 05C76

\noindent {\bf Keywords:}  Biharmonic index; Topological index; Extremal value; Graph operation

\end{abstract}
\baselineskip 20pt

\section{\large Introduction}

The Laplacian matrix of a graph $G$, denoted by $L(G)$, is given by $L(G)=D(G)-A(G)$, where $D(G)$ is the diagonal matrix of its vertex degrees and $A(G)$ is the adjacency matrix. The Laplacian characteristic polynomial of $G$, is equal to $\det(xI_n-L(G))$, denoted by $\phi(L(G))$. We denote
$\lambda_i=\lambda_i(G)$ the $i$-th smallest eigenvalue of $L(G)$. In particular, $\lambda_2(G)$ and $\lambda_n(G)$ are called the algebraic connectivity \cite{F} and the Laplacian spectral radius of $G$, respectively. The Laplacian spectral ratio of a connected graph $G$ with $n$ vertices is defined as
$r_L(G)=\frac{\lambda_n}{\lambda_2}$. Barahona et al. \cite{BP} showed that a graph $G$ exhibits better synchronizability if the ratio $r_L(G)$ is as small as possible.

The topological indices have fundamental applications in chemical disciplines \cite{D, DB, PLC}, computational linguistics \cite{MWL}, computational biology \cite{MKGD} and etc. Let $d(u, v)$ be the distance between vertices $u$ and $v$ of $G$. The Wiener index $W(G)$ of a connected graph $G$, introduced by Wiener \cite{W} in 1947, is defined as $W(G)=\sum_{u, v\in V(G)}d(u, v)$, which is used to predict the boiling points of paraffins by their molecular structure.
The Wiener index found numerous applications in pure mathematics and other sciences \cite{GKM, KST}. In 1972, Gutman and Trinajsti\'{c} \cite{GT} proposed the first Zagreb index $M_1(G)$ of a graph $G$, and defined it as the sum of the squares of vertex degrees of $G$. There is a wealth of literature relating to the first Zagreb index, the reader is referred to the survey \cite{BDFG, FT} and the references therein. Recently, Furtula and Gutman \cite{FG} defined the forgotten topological index of a graph $G$ as the sum of the cubes of vertex degrees of $G$, denoted by $F(G)$. In particular, the forgotten topological index of several important chemical structures which have high frequency in drug structures is obtained \cite{GSIJF}. The Kirchhoff index of a graph $G$ is defined as the sum of resistance distances \cite{KR} between all pairs of vertices of $G$, denoted by $Kf(G)$. Gutman and Mohar \cite{GM1} gave an important calculation formula on Kirchhoff index, that is $Kf(G)=\sum_{i=2}^{n}\frac{1}{\lambda_i}$. The Kirchhoff index is often used to measure how well connected a network is \cite{GBS, KR}.

In 2010, Lipman, Rustamov and Funkhouser \cite{LRF} proposed the biharmonic distance to measure the distances between pairs of points on a 3D surface, which is a fundamental problem in computer graphics and geometric processing. Moreover, the biharmonic distance has some advantages over resistance distance and geodesic distance in computer graphics, geometric processing, shape analysis and etc.
Inspired by Wiener index, Wei, Li and Yang \cite{WLY} proposed the concept of biharmonic index of a graph $G$ as follows:
$$BH(G)=\frac{1}{2}\sum\limits_{u\in V(G)}\sum\limits_{v\in V(G)}d_{B}^2(u,v)=n\sum\limits_{i=2}^{n}\frac{1}{\lambda_i^2(G)}.$$
Meanwhile, they obtained a relationship between biharmonic index and Kirchhoff index and determined the unique graph having the minimum biharmonic index among the connected graphs with $n$ vertices.

In this paper, we continue the work on the biharmonic index of connected graphs. Firstly, we establish the mathematical relationships between the biharmonic index and some classic topological indices: the first Zagreb index, the forgotten topological index and the Kirchhoff index. Secondly, we study the extremal value on the biharmonic index for trees and firefly graphs of fixed order, around Problem 6.3 in \cite{WLY}. Meanwhile, we show that the star is the unique graph with maximum biharmonic index among all graphs on diameter two. Finally, some graph operations on the biharmonic index are presented.

\section{\large  Preliminaries}

Let $K_{1, \, n-1}$, $P_n$ and $K_n$ denote the star, the path and the complete graph with $n$ vertices, respectively. Let $\tau(G)$ be the number of spanning trees of a connected graph.
The double star $S(a,\, b)$ is the tree obtained from $K_2$ by attaching $a$ pendant edges to a vertex and $b$ pendant edges to the other. A firefly graph $F_{s,\,t,\,n-2s-2t-1}$ ($s\geq0, \, t\geq 0, \, n-2s-2t-1\geq 0$) is a graph of order $n$ that consists of $s$ triangles, $t$ pendent paths of length $2$ and $n-2s-2t-1$ pendent edges, sharing a common vertex. For $v\in V(G)$, let $L_v(G)$ be the principal submatrix of $L(G)$ formed by deleting the row and column corresponding to vertex $v$.

\begin{lemma}{\bf (\cite{PS})}\label{le2,1} 
Let $\overline{a}=(a_1, \ldots, a_n)$ and $\overline{b}=(b_1, \ldots, b_n)$ be two positive $n$-tuples. Then
$$\frac{\left(\sum\limits_{i=1}^{n}a_i^2\right)\left(\sum\limits_{i=1}^{n}b_i^2\right)}{\left(\sum\limits_{i=1}^{n}a_ib_i\right)^2}\leq \frac{(a+A)^2}{4aA},$$
where $a=\min\{\frac{a_i}{b_i}\}$ and $A=\max\{\frac{a_i}{b_i}\}$ for $1\leq i \leq n$.
\end{lemma}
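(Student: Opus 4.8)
The plan is to prove this reverse Cauchy--Schwarz (Pólya--Szegő) inequality by reducing it to a single \emph{pointwise} inequality followed by one application of the AM--GM inequality. Since all entries are positive, I would set $t_i = a_i/b_i$, so that $a \le t_i \le A$ for every $i$ by the very definitions of $a$ and $A$.

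First I would record the pointwise fact that, because $t_i$ lies between $a$ and $A$, the product $(t_i - a)(A - t_i)$ is nonnegative; multiplying through by $b_i^2 > 0$ turns this into
$$(a_i - a b_i)(A b_i - a_i) \ge 0 \quad \text{for each } i.$$
Expanding the product as $(a+A)a_i b_i - a_i^2 - aA\, b_i^2 \ge 0$ and summing over $i = 1, \ldots, n$ collapses the cross terms into exactly the three sums appearing in the statement, yielding the linear inequality
$$\sum_{i=1}^{n} a_i^2 + aA \sum_{i=1}^{n} b_i^2 \le (a+A) \sum_{i=1}^{n} a_i b_i.$$

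The second step converts this linear combination into the geometric quantity we want. By AM--GM applied to the two positive terms $\sum a_i^2$ and $aA \sum b_i^2$,
$$2\sqrt{aA}\,\sqrt{\left(\sum_{i=1}^{n} a_i^2\right)\left(\sum_{i=1}^{n} b_i^2\right)} \le \sum_{i=1}^{n} a_i^2 + aA \sum_{i=1}^{n} b_i^2 \le (a+A) \sum_{i=1}^{n} a_i b_i.$$
Squaring the outer inequality (legitimate, as every quantity is positive) and dividing by $4aA\left(\sum_{i=1}^{n} a_i b_i\right)^2$ produces precisely the claimed bound $\frac{(a+A)^2}{4aA}$.

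The only genuinely nonroutine step is spotting the right starting inequality, namely encoding the constraint $a \le a_i/b_i \le A$ as the single nonnegative quadratic $(a_i - a b_i)(A b_i - a_i) \ge 0$, whose sum reorganizes itself into $\sum a_i^2$, $\sum b_i^2$, and $\sum a_i b_i$. Once that identity is in hand everything is forced: AM--GM is the natural device for matching $\sum a_i^2 + aA \sum b_i^2$ to the geometric mean that mirrors the quotient on the left-hand side. I would also remark that equality propagates through both steps exactly when each $t_i \in \{a, A\}$ and the AM--GM term is tight, which pins down the extremal tuples, although this is not required for the stated inequality.
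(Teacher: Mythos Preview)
Your argument is correct and is in fact the standard proof of the P\'olya--Szeg\H{o} inequality: the pointwise quadratic $(a_i - a b_i)(A b_i - a_i) \ge 0$ summed over $i$, followed by AM--GM, is exactly the classical derivation. There is nothing to compare against, however, because the paper does not prove Lemma~\ref{le2,1}; it merely quotes the inequality from \cite{PS} as a preliminary tool and supplies no argument of its own.
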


\begin{lemma}{\bf (\cite{PS})}\label{le2,2} 
Let $\overline{a}=(a_1, \ldots, a_n)$ and $\overline{b}=(b_1, \ldots, b_n)$ be two positive $n$-tuples. Then
$$\left(\sum\limits_{i=1}^{n}a_i^2\right)\left(\sum\limits_{i=1}^{n}b_i^2\right)-\left(\sum\limits_{i=1}^{n}a_ib_i\right)^2\leq \frac{(A-a)^2}{4aA}\left(\sum\limits_{i=1}^{n}a_ib_i\right)^2,$$
where $a=\min\{\frac{a_i}{b_i}\}$ and $A=\max\{\frac{a_i}{b_i}\}$ for $1\leq i \leq n$.
\end{lemma}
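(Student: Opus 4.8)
The plan is to obtain Lemma~\ref{le2,2} as a direct algebraic consequence of Lemma~\ref{le2,1}, which has just been stated and which I am free to assume. The point is that the two bounds are in fact equivalent: the extra constant $\frac{(A-a)^2}{4aA}$ in Lemma~\ref{le2,2} is precisely what one obtains by subtracting $1$ from the Kantorovich-type ratio bound $\frac{(a+A)^2}{4aA}$ of Lemma~\ref{le2,1}. Since both tuples are positive, $\left(\sum_{i=1}^{n}a_ib_i\right)^2$ is strictly positive, so multiplying through by it preserves the direction of the inequality.

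Concretely, I would first clear the denominator in Lemma~\ref{le2,1} to obtain
$$\left(\sum_{i=1}^{n}a_i^2\right)\left(\sum_{i=1}^{n}b_i^2\right)\leq \frac{(a+A)^2}{4aA}\left(\sum_{i=1}^{n}a_ib_i\right)^2,$$
and then subtract $\left(\sum_{i=1}^{n}a_ib_i\right)^2$ from both sides. The coefficient that results collapses by the elementary identity
$$\frac{(a+A)^2}{4aA}-1=\frac{(a+A)^2-4aA}{4aA}=\frac{(A-a)^2}{4aA},$$
which is exactly the constant appearing in the statement, and the claimed inequality follows at once.

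There is no genuine obstacle here; the only thing to verify is the routine identity $(a+A)^2-4aA=(A-a)^2$. Should one prefer a self-contained argument that does not invoke Lemma~\ref{le2,1}, the standard route is to note that each ratio $a_i/b_i$ lies in $[a,A]$, so $(a_i-ab_i)(Ab_i-a_i)\geq 0$ for every $i$; summing these $n$ inequalities yields $\sum_{i=1}^{n}a_i^2+aA\sum_{i=1}^{n}b_i^2\leq (a+A)\sum_{i=1}^{n}a_ib_i$, and a single application of the arithmetic-geometric mean inequality to the left-hand side recovers Lemma~\ref{le2,1}, after which the derivation above applies verbatim.
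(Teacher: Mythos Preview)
Your derivation is correct: Lemma~\ref{le2,2} is indeed an immediate algebraic consequence of Lemma~\ref{le2,1} via the identity $\frac{(a+A)^2}{4aA}-1=\frac{(A-a)^2}{4aA}$, and your optional self-contained argument for Lemma~\ref{le2,1} is the standard one. The paper itself offers no proof of this lemma, simply quoting it from P\'{o}lya--Szeg\H{o}~\cite{PS}, so there is nothing to compare against; your approach is entirely appropriate and arguably the cleanest way to see why the two lemmas are equivalent.
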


\begin{lemma}{\bf (\cite{R})}\label{le2,3} 
If $a_i>0$, $b_i>0$, $p>0$, $i=1, 2, \ldots, n$, then the following inequality holds:
$$\sum\limits_{i=1}^{n}\frac{a_i^{p+1}}{b_i^p}\geq \frac{\left(\sum\limits_{i=1}^{n}a_i\right)^{p+1}}{\left(\sum\limits_{i=1}^{n}b_i\right)^{p}}$$
with equality if and only if $\frac{a_1}{b_1}=\frac{a_2}{b_2}=\cdots=\frac{a_n}{b_n}$.
\end{lemma}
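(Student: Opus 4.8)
The plan is to recognize the stated inequality as Radon's inequality and to derive it from Hölder's inequality via a suitable factorization of each term $a_i$. First I would write each summand as the identity
$$a_i = \left(\frac{a_i^{p+1}}{b_i^p}\right)^{\frac{1}{p+1}} \cdot\, b_i^{\frac{p}{p+1}},$$
valid for all positive $a_i, b_i$. The purpose of this split is that the first factor, raised to the power $p+1$, reproduces exactly the terms $a_i^{p+1}/b_i^p$ appearing on the left-hand side, while the second factor is tuned to return $b_i$ when raised to the conjugate exponent.

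Next I would apply Hölder's inequality with the conjugate pair $r = p+1$ and $s = \frac{p+1}{p}$, noting that $\frac{1}{r} + \frac{1}{s} = \frac{1}{p+1} + \frac{p}{p+1} = 1$ and that both exponents exceed $1$ because $p > 0$. This gives
$$\sum_{i=1}^{n} a_i \,\leq\, \left(\sum_{i=1}^{n}\frac{a_i^{p+1}}{b_i^p}\right)^{\frac{1}{p+1}} \left(\sum_{i=1}^{n} b_i\right)^{\frac{p}{p+1}}.$$
Raising both sides to the power $p+1$ (permissible since both sides are positive) and dividing through by $\left(\sum_{i=1}^{n} b_i\right)^{p}$ then yields precisely the claimed bound.

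For the equality characterization I would invoke the equality case of Hölder, which forces the sequences $\left(\tfrac{a_i^{p+1}}{b_i^p}\right)_i$ and $(b_i)_i$ to be proportional; simplifying, this is equivalent to $(a_i/b_i)^{p+1}$, hence $a_i/b_i$, being constant in $i$. An alternative and conceptually cleaner route avoids Hölder entirely: put $x_i = a_i/b_i$ and weights $w_i = b_i/\sum_j b_j$, and apply the weighted Jensen inequality to the convex function $t \mapsto t^{p+1}$, which is convex on $(0,\infty)$ because $p+1 > 1$; its strict convexity delivers the same equality condition at once.

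The main obstacle here is not conceptual depth but correct bookkeeping of exponents: one must select the Hölder conjugate pair so that the factorization simultaneously recovers $\sum_{i} a_i^{p+1}/b_i^p$ and $\sum_{i} b_i$, and then track the proportionality through the algebraic simplification to isolate the condition $a_1/b_1 = \cdots = a_n/b_n$. Should a fully self-contained argument be preferred, with no appeal to Hölder or Jensen, I would instead establish the two-variable case $\frac{a_1^{p+1}}{b_1^p} + \frac{a_2^{p+1}}{b_2^p} \geq \frac{(a_1+a_2)^{p+1}}{(b_1+b_2)^p}$ directly from convexity and then induct on $n$, in which case that base inequality is the only step requiring genuine work.
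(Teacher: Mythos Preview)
Your argument is correct: the factorization $a_i = (a_i^{p+1}/b_i^p)^{1/(p+1)}\, b_i^{p/(p+1)}$ together with H\"older's inequality for the conjugate pair $(p+1,\,(p+1)/p)$ yields the claim, and the equality analysis via the H\"older equality case (or, equivalently, via strict convexity of $t\mapsto t^{p+1}$ in the Jensen reformulation) is handled properly.

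There is nothing to compare against, however: in the paper this lemma is simply quoted from Radon~\cite{R} as a preliminary result and no proof is given. So you have supplied a complete, standard proof where the paper offers none. Either of your suggested routes (H\"older or weighted Jensen) would be acceptable as a self-contained justification; the two-term-plus-induction alternative you sketch at the end also works but is unnecessary once H\"older is available.
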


\begin{lemma}{\bf (\cite{O})}\label{le2,4} 
Let $n\geq 1$ be an integer and $a_1\geq a_2\geq \cdots \geq a_n$ be some non-negative real numbers. Then
$$(a_1+a_n)(a_1+a_2+\cdots+a_n)\geq a_1^2+a_2^2+\cdots+a_n^2+na_1a_n$$
Moreover, the equality holds if and only if for some $r\in \{1,2,\ldots,n\}$, $a_1=\cdots=a_r$ and
$a_{r+1}=\cdots=a_n$.
\end{lemma}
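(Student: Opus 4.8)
The plan is to prove the inequality by reducing the difference of its two sides to a single sum of manifestly non-negative terms, and then to read off the equality condition directly from that representation.

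First I would set $S=\sum_{i=1}^{n}a_i$ and rewrite the claimed inequality as the assertion that
$$(a_1+a_n)S-\sum_{i=1}^{n}a_i^2-na_1a_n\geq 0.$$
Expanding $(a_1+a_n)S=\sum_{i=1}^{n}(a_1a_i+a_na_i)$ and writing $na_1a_n=\sum_{i=1}^{n}a_1a_n$, the left-hand side becomes $\sum_{i=1}^{n}\left(a_1a_i+a_na_i-a_i^2-a_1a_n\right)$. The key algebraic step is to factor the $i$-th summand as
$$a_1a_i+a_na_i-a_i^2-a_1a_n=(a_1-a_i)(a_i-a_n),$$
which is verified by multiplying out. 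Hence the difference of the two sides equals $\sum_{i=1}^{n}(a_1-a_i)(a_i-a_n)$.

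Since the $a_i$ are arranged in non-increasing order, we have $a_1\geq a_i\geq a_n$ for every $i$, so each factor $a_1-a_i$ and $a_i-a_n$ is non-negative; consequently every term $(a_1-a_i)(a_i-a_n)$ is non-negative and the inequality follows at once. For the equality case, the sum vanishes precisely when $(a_1-a_i)(a_i-a_n)=0$ for all $i$, i.e.\ when each $a_i$ equals either $a_1$ or $a_n$. Because the sequence is non-increasing, the indices with $a_i=a_1$ form an initial block and those with $a_i=a_n$ form a terminal block, which yields exactly the stated description: there is some $r\in\{1,\ldots,n\}$ with $a_1=\cdots=a_r$ and $a_{r+1}=\cdots=a_n$.

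Since every step is either an algebraic identity or a term-by-term sign comparison, there is no substantial obstacle; the only point requiring a moment's care is spotting the factorization $(a_1-a_i)(a_i-a_n)$, after which both the inequality and its sharpness are immediate.
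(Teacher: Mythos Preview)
Your argument is correct: the identity
\[
(a_1+a_n)\sum_{i=1}^{n}a_i-\sum_{i=1}^{n}a_i^2-na_1a_n=\sum_{i=1}^{n}(a_1-a_i)(a_i-a_n)
\]
reduces the inequality to a sum of non-negative terms, and the equality analysis is handled cleanly. Note, however, that in this paper the lemma is only \emph{quoted} from \cite{O} without proof, so there is no in-paper argument to compare against; your proof is in fact the standard one given in the cited source, so in that sense the approaches coincide.
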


\begin{lemma}{\bf (\cite{LGSBB})}\label{le2,5} 
Let $a_1, \ldots, a_n\geq 0 $. Then
$$n\left[\frac{1}{n}\sum\limits_{i=1}^{n}a_i-\left(\prod\limits_{i=1}^{n}a_i\right)^{\frac{1}{n}}\right]\leq \Phi \leq n(n-1)\left[\frac{\sum\limits_{i=1}^{n}a_i}{n}-\left(\prod\limits_{i=1}^{n}a_i\right)^{\frac{1}{n}}\right],$$
where $\Phi=n\sum_{i=1}^{n}a_i-\left(\sum_{i=1}^{n}\sqrt{a_i}\right)^2$.
\end{lemma}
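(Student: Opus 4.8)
The plan is to pass to the variables $b_i=\sqrt{a_i}$ and record the key identity $\Phi=\sum_{i<j}(b_i-b_j)^2$, which follows by expanding the square and comparing with $\Phi=n\sum_i b_i^2-\big(\sum_i b_i\big)^2$. Writing $Q:=\sum_i a_i-n\big(\prod_i a_i\big)^{1/n}=\sum_i b_i^2-n\big(\prod_i b_i\big)^{2/n}$ for $n$ times the arithmetic--geometric mean gap of the $a_i$, the two displayed inequalities are exactly $Q\le \Phi\le (n-1)Q$. So I would first reduce the lemma to these two bounds and then treat them separately, since they turn out to rest on opposite-facing mean inequalities.

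For the upper bound $\Phi\le(n-1)Q$, a short rearrangement (using $\Phi=(n-1)\sum_i b_i^2-2\sum_{i<j}b_ib_j$) shows it is equivalent to $2\sum_{i<j}b_ib_j\ge n(n-1)\big(\prod_i b_i\big)^{2/n}$. I would obtain this directly from AM--GM applied to the $n(n-1)$ products $b_ib_j$ with $i\ne j$: their geometric mean equals $\big(\prod_i b_i\big)^{2/n}$, because each factor $b_k$ occurs exactly $2(n-1)$ times, so that $\prod_{i\ne j}b_ib_j=\big(\prod_i b_i\big)^{2(n-1)}$. Equivalently one may quote Maclaurin's inequality $\big(e_2/\binom{n}{2}\big)^{1/2}\ge e_n^{1/n}$. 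This half is clean and self-contained.

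The lower bound $Q\le\Phi$ is the crux. Rearranging as above, it is equivalent to the homogeneous inequality $\big(\sum_i b_i\big)^2\le(n-1)\sum_i b_i^2+n\big(\prod_i b_i\big)^{2/n}$, and here AM--GM points the wrong way: the geometric-mean term now sits on the larger side, so one needs a \emph{lower} bound for $\prod_i b_i$ rather than the usual upper bound. When some $b_i=0$ the product term vanishes and the inequality collapses to the Cauchy--Schwarz estimate $\big(\sum_{i=1}^{n-1}b_i\big)^2\le(n-1)\sum_{i=1}^{n-1}b_i^2$ on the remaining coordinates, so that boundary case is easy. For the interior I would use homogeneity to normalize $\prod_i b_i=1$ and minimize $f(b)=(n-1)\sum_i b_i^2+n-\big(\sum_i b_i\big)^2$ over $\{b_i>0:\prod_i b_i=1\}$; a Lagrange-multiplier computation gives $2(n-1)b_k^2-2\big(\sum_i b_i\big)b_k=\mu$ for every $k$, so at any critical point the $b_k$ take at most two distinct values. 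Checking that $f\to+\infty$ as the configuration degenerates then confines the minimum to these two-value configurations, and the inequality reduces to a single-variable verification.

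I expect this optimization step --- reducing to two distinct values and disposing of the resulting one-parameter family --- to be the main obstacle, since it is the only place where the argument is not a one-line mean inequality. The payoff is that equality holds precisely when all the $a_i$ are equal (and, on the boundary, when all the nonzero $a_i$ are equal), which is consistent with the two tight examples $a=(t,\dots,t)$ and $a=(t,\dots,t,0)$.
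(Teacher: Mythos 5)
The paper never proves this statement: Lemma~\ref{le2,5} is quoted as a known preliminary from \cite{LGSBB} (it is essentially Kober's inequality), so there is no in-paper proof to compare against and your proposal must stand on its own. Your reduction is correct: with $b_i=\sqrt{a_i}$ and $Q=\sum_i b_i^2-n\left(\prod_i b_i\right)^{2/n}$, the lemma is exactly $Q\le\Phi\le(n-1)Q$. Your proof of the upper half is complete and correct: $\Phi\le(n-1)Q$ is equivalent to $2\sum_{i<j}b_ib_j\ge n(n-1)\left(\prod_k b_k\right)^{2/n}$, and AM--GM over the $n(n-1)$ products $b_ib_j$ with $i\ne j$, combined with the identity $\prod_{i\ne j}b_ib_j=\left(\prod_k b_k\right)^{2(n-1)}$, gives precisely this.

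The lower half, however, contains a genuine gap: the coercivity claim ``$f\to+\infty$ as the configuration degenerates'' is false. Take $n=3$ and $b=(t,t,t^{-2})$, which satisfies $\prod_i b_i=1$; then $f(b)=2(2t^2+t^{-4})+3-(2t+t^{-2})^2=t^{-4}-4t^{-1}+3\to 3$ as $t\to\infty$, and more generally $f$ stays bounded whenever exactly $n-1$ coordinates blow up. So your argument does not confine a minimizing sequence to a compact subset of $\{b_i>0:\prod_i b_i=1\}$; a priori the infimum could be negative and approached only at infinity, and the critical-point analysis alone cannot exclude that. The gap is repairable by switching normalizations: since the inequality is homogeneous, minimize $g(b)=(n-1)+n\left(\prod_i b_i\right)^{2/n}-\left(\sum_i b_i\right)^2$ on the compact set $\{b\in[0,\infty)^n:\sum_i b_i^2=1\}$, where the minimum is attained; boundary points with some $b_i=0$ are disposed of by your Cauchy--Schwarz observation, and at interior critical points the Lagrange condition reads $\lambda b_k^2+Sb_k-P=0$ with $S=\sum_i b_i$ and $P=\left(\prod_i b_i\right)^{2/n}$, again forcing at most two distinct values among the $b_k$. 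Separately, the step you defer as ``a single-variable verification'' is really a one-parameter family of inequalities: for each $1\le k\le n-1$ you must show $(n-1)(kx^2+n-k)+nx^{2k/n}\ge(kx+n-k)^2$ for all $x>0$, and this is not trivial (both sides agree to second order at $x=1$); the proof is incomplete until it is actually carried out.
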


\begin{lemma}{\bf (\cite{LMR})}\label{le2,6} 
Let $a_1, a_2, \ldots, a_n$ and $b_1, b_2, \ldots, b_n$ be real numbers such
that $a\leq a_i\leq A$ and $b\leq b_i\leq B$ for $i=1, 2, \ldots, n$. Then there holds
$$\left\lvert \frac{1}{n}\sum\limits_{i=1}^{n}a_ib_i-\left(\frac{1}{n}\sum\limits_{i=1}^{n}a_i\right)\left(\frac{1}{n}\sum\limits_{i=1}^{n}b_i\right)\right\rvert\leq \frac{1}{n}\left\lfloor\frac{n}{2}\right\rfloor\left(1-\frac{1}{n}\left\lfloor\frac{n}{2}\right\rfloor\right)(A-a)(B-b),$$
where $\lfloor x\rfloor$ denotes the integer part of $x$.
\end{lemma}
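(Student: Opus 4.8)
The inequality is a sharp discrete Gr\"uss-type bound on the ``covariance''
$$C := \frac1n\sum_{i=1}^n a_i b_i - \left(\frac1n\sum_{i=1}^n a_i\right)\left(\frac1n\sum_{i=1}^n b_i\right),$$
and my plan is to exploit its multiaffine structure to reduce the estimate to a purely combinatorial optimization. The first observation is that, with all other variables held fixed, $C$ is an affine function of each single variable $a_k$ (and likewise of each $b_k$): indeed $C = a_k\big(\tfrac1n b_k - \tfrac1{n^2}\sum_j b_j\big) + (\text{terms free of } a_k)$. Note that the naive symmetric identity $C=\tfrac{1}{2n^2}\sum_{i,j}(a_i-a_j)(b_i-b_j)$ only yields the non-sharp constant $\tfrac{n-1}{2n}(A-a)(B-b)$, so a finer argument is needed. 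Since $|C|$ is the modulus of an affine function on $[a,A]$, it is convex in $a_k$ and attains its maximum at an endpoint, and the same holds in each $b_k$. Sweeping the coordinates one at a time therefore shows that $\sup|C|$ over the box $[a,A]^n\times[b,B]^n$ is attained at a \emph{vertex}, i.e.\ when every $a_i\in\{a,A\}$ and every $b_i\in\{b,B\}$.

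Next I would normalize. Writing $a_i = a + \alpha_i(A-a)$ and $b_i = b + \beta_i(B-b)$ with $\alpha_i,\beta_i\in\{0,1\}$, the translation invariance and bilinearity of $C$ give $C = (A-a)(B-b)\,\widetilde C$, where $\widetilde C$ is the same covariance formed from the $0$--$1$ vectors $(\alpha_i)$ and $(\beta_i)$. Setting $p=\#\{i:\alpha_i=1\}$, $q=\#\{i:\beta_i=1\}$ and $r=\#\{i:\alpha_i=\beta_i=1\}$, one finds $\widetilde C = \tfrac{r}{n}-\tfrac{pq}{n^2} = \tfrac{1}{n^2}(nr-pq)$. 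The inclusion--exclusion constraints $\max\{0,\,p+q-n\}\le r\le\min\{p,q\}$ are precisely the feasibility conditions for such index sets, so the whole problem collapses to bounding $|nr-pq|$ over integers $0\le p,q\le n$ with $r$ in this range.

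The remaining step is this integer optimization. On the positive side, fixing $p\le q$ and taking $r=\min\{p,q\}=p$ gives $nr-pq = p(n-q)$, which is largest at $p=q=\lfloor n/2\rfloor$, yielding $\lfloor n/2\rfloor\lceil n/2\rceil$. On the negative side one takes $r=\max\{0,p+q-n\}$ and checks the two cases $p+q\le n$ (where $nr-pq=-pq$) and $p+q>n$ (where $nr-pq=-(n-p)(n-q)$); both are extremized at the symmetric configuration and give $-\lfloor n/2\rfloor\lceil n/2\rceil$. Hence $|nr-pq|\le\lfloor n/2\rfloor\lceil n/2\rceil=\lfloor n/2\rfloor\big(n-\lfloor n/2\rfloor\big)$, and dividing by $n^2$ turns $|\widetilde C|$ into exactly the constant $\tfrac1n\lfloor n/2\rfloor\big(1-\tfrac1n\lfloor n/2\rfloor\big)$. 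Restoring the factor $(A-a)(B-b)$ completes the argument.

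I expect the combinatorial optimization to be the only delicate part: one must set up the feasible region for $(p,q,r)$ correctly through inclusion--exclusion and then verify, on both the positive and negative sides, that the integer maximum of $|nr-pq|$ equals $\lfloor n/2\rfloor\lceil n/2\rceil$. By contrast, the vertex reduction is routine once the separate affineness of $C$ in each variable is noticed, the only point worth stating explicitly being that passing to $|C|$ preserves the ``maximum at an endpoint'' property by convexity.
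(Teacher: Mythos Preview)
The paper does not supply a proof of this lemma; it is quoted from \cite{LMR} as a known Gr\"uss-type inequality and then applied as a black box in the proof of Theorem~\ref{th4,3}. There is therefore no in-paper argument to compare against.

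Your proposed proof is correct. The reduction to vertices of the box is sound: $C$ is genuinely affine in each single coordinate $a_k$ (respectively $b_k$) with the others frozen, so $|C|$ is convex in that coordinate and its maximum on $[a,A]$ (respectively $[b,B]$) is at an endpoint; compactness gives a maximizer, and pushing coordinates one at a time to endpoints yields a vertex maximizer. The normalization step is just translation invariance and homogeneity of the sample covariance, and the formula $\widetilde C=(nr-pq)/n^2$ with $p=\sum\alpha_i$, $q=\sum\beta_i$, $r=\sum\alpha_i\beta_i$ is immediate. In the integer optimization you correctly take $r=\min\{p,q\}$ for the upper side, giving $p(n-q)$ with $p\le q$ and hence at most $\lfloor n/2\rfloor\lceil n/2\rceil$ at $p=q=\lfloor n/2\rfloor$; and $r=\max\{0,p+q-n\}$ for the lower side, where both subcases ($-pq$ when $p+q\le n$, and $-(n-p)(n-q)$ when $p+q>n$) are bounded in absolute value by $\lfloor n/2\rfloor\lceil n/2\rceil$ via the product-under-sum-constraint argument. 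Dividing by $n^2$ and restoring the factor $(A-a)(B-b)$ gives exactly the stated constant $\tfrac1n\lfloor n/2\rfloor\bigl(1-\tfrac1n\lfloor n/2\rfloor\bigr)$. This vertex-reduction route is in fact the standard approach in \cite{LMR}.
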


\begin{lemma}{\bf (\cite{GMS})}\label{le2,7} 
If $T$ is a tree with diameter $d(T)$, then $\lambda_2(T)\leq 2\left(1-\cos\left(\frac{\pi}{d+1}\right)\right)$.
\end{lemma}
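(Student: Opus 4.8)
The plan is to recognize the bound $2\left(1-\cos\frac{\pi}{d+1}\right)$ as the algebraic connectivity $\lambda_2(P_{d+1})$ of the path on $d+1$ vertices, and then to exhibit a test vector on $T$ that certifies $\lambda_2(T)\le\lambda_2(P_{d+1})$ through the Courant--Fischer (Rayleigh quotient) characterization
$$\lambda_2(T)=\min_{\substack{x\perp\mathbf{1},\ x\neq 0}}\frac{x^{\top}L(T)x}{x^{\top}x}=\min_{\substack{x\perp\mathbf{1},\ x\neq 0}}\frac{\sum_{uv\in E(T)}(x_u-x_v)^2}{\sum_{w\in V(T)}x_w^2}.$$
Since $T$ has diameter $d$, it contains a geodesic (diametral) path $P:\,v_0v_1\cdots v_d$ on $d+1$ vertices, and the idea is to restrict the minimization to vectors that are constant on the branches hanging off this path.

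To make this precise I would delete the $d$ edges of $P$ from $T$. As $T$ is a tree this leaves exactly $d+1$ components, each containing precisely one path vertex $v_j$; write $B_j$ for the branch containing $v_j$ and $n_j=|B_j|\ge 1$, so that $\sum_{j=0}^{d}n_j=n$. For a vector $x$ taking the common value $g_j$ on every vertex of $B_j$, each edge of $T$ either lies inside a single branch (contributing $0$) or is a path edge $v_jv_{j+1}$ (contributing $(g_j-g_{j+1})^2$), since in a tree no edge can join two distinct branches. Hence for such $x$ the Rayleigh quotient collapses to
$$\frac{\sum_{j=0}^{d-1}(g_j-g_{j+1})^2}{\sum_{j=0}^{d}n_jg_j^2}=\frac{g^{\top}L(P_{d+1})\,g}{g^{\top}Mg},\qquad M:=\diag(n_0,\dots,n_d),$$
while the constraint $x\perp\mathbf{1}$ becomes $\sum_{j}n_jg_j=0$, i.e. $g\perp_{M}\mathbf{1}$.

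Restricting the minimum to this $(d+1)$-dimensional family can only increase it, so $\lambda_2(T)\le\lambda_2\big(L(P_{d+1}),M\big)$, the second eigenvalue of the generalized problem $L(P_{d+1})g=\lambda Mg$. It then remains to compare this with the unweighted value. Since each $n_j\ge 1$ we have $M\succeq I$, whence $g^{\top}Mg\ge g^{\top}g$ for all $g$; because $L(P_{d+1})$ is positive semidefinite, every quotient only shrinks when the denominator grows, and feeding this into the min--max formulas
$$\lambda_2\big(L(P_{d+1}),M\big)=\min_{\dim U=2}\ \max_{0\neq g\in U}\frac{g^{\top}L(P_{d+1})g}{g^{\top}Mg}\le\min_{\dim U=2}\ \max_{0\neq g\in U}\frac{g^{\top}L(P_{d+1})g}{g^{\top}g}=\lambda_2(P_{d+1})$$
yields the desired inequality. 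Finally $\lambda_2(P_{d+1})=2\left(1-\cos\frac{\pi}{d+1}\right)$ from the known path spectrum, completing the chain $\lambda_2(T)\le 2\left(1-\cos\frac{\pi}{d+1}\right)$. I expect the only real obstacle to be organizational: verifying rigorously that the branch decomposition forces every non-path edge to contribute zero to the numerator, and handling the mass-matrix min--max comparison cleanly. Both steps are elementary once the tree structure is used correctly, but they are precisely where a careless argument would slip.
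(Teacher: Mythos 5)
Your proposal is mathematically correct, but note that the paper itself gives no proof of this statement: Lemma \ref{le2,7} is imported verbatim from Grone--Merris--Sunder \cite{GMS} as a known result, so there is nothing internal to compare against. Your argument is a sound self-contained reconstruction of the classical bound: the branch decomposition is valid (deleting the $d$ edges of a diametral path from a tree on $n$ vertices leaves exactly $d+1$ components, one per path vertex, so every non-path edge is internal to a branch), the restricted Rayleigh minimization correctly yields $\lambda_2(T)\le \mu_2\bigl(L(P_{d+1}),M\bigr)$, and the comparison $\mu_2\bigl(L(P_{d+1}),M\bigr)\le\lambda_2(P_{d+1})$ via generalized Courant--Fischer with $M\succeq I$ and $L\succeq 0$ is legitimate. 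Two small remarks: first, the family of branch-constant vectors orthogonal to $\mathbf{1}$ is $d$-dimensional, not $(d+1)$-dimensional (the orthogonality constraint removes one dimension) --- harmless, since you only use that restriction raises the minimum; second, the generalized min--max machinery can be avoided entirely by taking $U=\mathrm{span}\{\mathbf{1},\varphi\}$ with $\varphi$ the Fiedler vector of $P_{d+1}$, choosing the nonzero $g^*\in U$ satisfying the single linear constraint $\mathbf{1}^{\top}Mg^*=0$, and observing
$$\frac{(g^*)^{\top}L(P_{d+1})g^*}{(g^*)^{\top}Mg^*}\le\frac{(g^*)^{\top}L(P_{d+1})g^*}{(g^*)^{\top}g^*}\le\lambda_2(P_{d+1}),$$
which certifies the bound with one explicit test vector. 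Either way, your chain $\lambda_2(T)\le\lambda_2(P_{d+1})=2\left(1-\cos\frac{\pi}{d+1}\right)$ is a complete proof of the cited lemma.
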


\begin{lemma}{\bf (\cite{JOT})}\label{le2,8} 
The number of Laplacian eigenvalues less than the average degree $2-\frac{2}{n}$ of a tree with $n$ vertices is at least $\lceil \frac{n}{2}\rceil$.
\end{lemma}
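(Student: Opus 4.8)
The plan is to locate the threshold $\alpha=2-\frac{2}{n}$ against the Laplacian spectrum by a direct diagonalization argument. First I would observe that this threshold is not only the average degree but also the average Laplacian eigenvalue: since $\sum_{i=1}^n\lambda_i=\mathrm{tr}\,L(T)=2(n-1)$, the mean of the $\lambda_i$ equals $2-\frac{2}{n}$, so the claim asserts that at least half of the eigenvalues lie below their mean. The tool I would use is the leaves-to-root diagonalization algorithm of Jacobs and Trevisan: for a rooted tree, initialize $a(v)=d(v)-\alpha$ and, on reaching a vertex $v$ whose children $c_1,\dots,c_k$ are already processed, reset $a(v)\leftarrow d(v)-\alpha-\sum_{i=1}^{k}\frac{1}{a(c_i)}$. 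The resulting diagonal is congruent to $L(T)-\alpha I$, so by Sylvester's law of inertia the number of negative entries equals $m_T[0,\alpha)$, the number of eigenvalues below $\alpha$. To avoid the exceptional branch triggered when some $a(c_i)=0$, I would run the sweep at $\alpha=2-\frac{2}{n}-\varepsilon$; since the spectrum is finite, for small $\varepsilon$ this leaves $m_T[0,\,2-\frac{2}{n})$ unchanged and produces no zero entries. It then suffices to prove that the sweep creates at least as many negative as positive entries; writing $\delta(T)$ for (number of negatives) minus (number of positives), the goal is $\delta(T)\ge 0$, which immediately yields $m_T[0,\alpha)\ge\lceil n/2\rceil$.

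The engine of the argument is a sign analysis of the update rule. Because $\alpha<2$, every leaf keeps its initial value $1-\alpha<0$ and is therefore negative, while a positive child $c$ contributes $-\frac{1}{a(c)}<0$ to its parent, pushing the parent toward negativity, and a negative child pushes it up. I would package this into an invariant maintained over the rooted subtrees $T_v$: namely $\delta(T_v)\ge 0$ when $a(v)$ ends negative, and $\delta(T_v)\ge -1$ when $a(v)$ ends positive. The base case is a leaf, and in the inductive step one combines the bounds supplied by the children's subtrees with the fact that each positive child both costs at most one unit of $\delta$ and simultaneously drives $a(v)$ downward; the bookkeeping of these competing effects is routine but must be carried out case by case on the signs of the $a(c_i)$.

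The hard part will be upgrading the bound $\delta(T)\ge -1$, which is all the invariant gives in general, to the sharp $\delta(T)\ge 0$. For even $n$ this is automatic, since $\delta(T)$ has the same parity as $n$ and $\delta(T)\ge -1$ then forces $\delta(T)\ge 0$; for odd $n$, however, $\delta(T)$ is odd and one must exclude the single borderline value $\delta(T)=-1$. This is exactly where the specific choice of threshold must be exploited: a generic cutoff below $2$ need not leave $\lceil n/2\rceil$ eigenvalues beneath it. The route I would pursue is to run the algorithm with the tree rooted at a vertex whose final value is negative, so that the stronger branch $\delta(T)\ge 0$ of the invariant applies; proving that such a rooting always exists—for instance, that rooting at a suitable leaf forces a negative root value—is the delicate point I expect to cost the most effort. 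I note in passing that the cruder identity $m_T[0,2)=n-\mu(T)\ge\lceil n/2\rceil$, with $\mu$ the matching number, does not suffice, since eigenvalues lying in the narrow window $[\,2-\frac{2}{n},\,2)$ would be lost; controlling precisely this window is the crux of the lemma.
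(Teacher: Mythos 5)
Your framework---the Jacobs--Trevisan leaves-to-root diagonalization of $L(T)-\alpha I$, Sylvester's law of inertia, and a perturbation of $\alpha$ to dodge the zero branch---is the right machinery, and it matches the method of the source this lemma actually comes from. Note, though, that the paper you are being compared against offers no proof at all: Lemma \ref{le2,8} is imported from \cite{JOT}, where it is the \emph{main theorem} of a thirty-page J.~Combin.\ Theory Ser.~B paper, not a statement with a short inductive proof. That context already signals that the two steps you defer cannot both be routine, and in fact each one conceals a genuine gap.

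The more serious gap is that your invariant does not propagate. Suppose a vertex $v$ ends negative and has $p\ge 2$ positive children (plus possibly some negative ones). Your inductive hypothesis supplies $\delta(T_{c})\ge -1$ for each positive child and $\delta(T_{c})\ge 0$ for each negative child, hence only $\delta(T_v)\ge 1-p$, which for $p\ge 2$ falls short of the claimed branch $\delta(T_v)\ge 0$. The ``competing effect'' you invoke---positive children drive $a(v)$ downward---does make the root negative, but that recovers only a single $+1$; it cannot pay off two or more units of debt. Closing this requires either ruling out positive-rooted subtrees with $\delta=-1$ (which the stated invariant does not imply, so the induction would have to carry a strictly stronger hypothesis, e.g.\ quantitative control of the value $a(v)$ rather than just its sign) or restructuring the induction altogether; this is precisely where the difficulty of the theorem lives and what \cite{JOT} spends most of its length on. Separately, for odd $n$ you leave open the existence of a rooting whose final root value is negative; without it, parity still permits $\delta(T)=-1$, and the conclusion degrades to $m_T\bigl[0,2-\tfrac{2}{n}\bigr)\ge\lceil n/2\rceil-1$. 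So as it stands the proposal is a plausible plan whose two load-bearing steps are absent: one you acknowledge as open, and the other fails in the form you state it.
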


\begin{lemma}{\bf (\cite{D1})}\label{le2,9} 
Let $G$ be a connected graph of diameter $2$. Then $\lambda_2(G)\geq 1$.
\end{lemma}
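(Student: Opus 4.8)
The plan is to pass to the complement graph $\overline{G}$, where the diameter-two hypothesis becomes a transparent local condition, and then to invoke a known upper bound for the largest Laplacian eigenvalue. The starting point is the identity $L(G)+L(\overline{G})=nI_n-J$, where $J$ is the all-ones matrix. Since $J\mathbf{1}=n\mathbf{1}$ and every Laplacian annihilates $\mathbf{1}$, the all-ones vector is a common eigenvector for the eigenvalue $0$, while on the orthogonal complement $\mathbf{1}^{\perp}$ one has $L(\overline{G})=nI_n-L(G)$ as operators. Consequently the nonzero part of the spectrum pairs up as $\lambda_i(G)+\lambda_{n+2-i}(\overline{G})=n$ for $i=2,\ldots,n$, and in particular $\lambda_2(G)=n-\lambda_n(\overline{G})$. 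Thus it suffices to prove $\lambda_n(\overline{G})\le n-1$.

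Next I would translate the hypothesis into a neighbourhood condition on $\overline{G}$. A pair $\{u,v\}$ is an edge of $\overline{G}$ precisely when $u$ and $v$ are non-adjacent in $G$, and since $\mathrm{diam}(G)=2$ such a pair has a common neighbour $w$ in $G$ with $w\neq u,v$. This vertex $w$ is adjacent in $G$ to both $u$ and $v$, hence adjacent in $\overline{G}$ to neither, so $w\notin N_{\overline{G}}(u)\cup N_{\overline{G}}(v)$. Because $u\in N_{\overline{G}}(v)$ and $v\in N_{\overline{G}}(u)$, the set $N_{\overline{G}}(u)\cup N_{\overline{G}}(v)$ already contains $u$ and $v$ but misses $w$, whence $|N_{\overline{G}}(u)\cup N_{\overline{G}}(v)|\le n-1$ for every edge $uv$ of $\overline{G}$.

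Finally I would apply to $H=\overline{G}$ the refined Anderson--Morley-type bound
$$\lambda_n(H)\le \max_{uv\in E(H)}\bigl|N_H(u)\cup N_H(v)\bigr|=\max_{uv\in E(H)}\bigl\{d_H(u)+d_H(v)-|N_H(u)\cap N_H(v)|\bigr\};$$
combined with the previous paragraph this yields $\lambda_n(\overline{G})\le n-1$ and hence $\lambda_2(G)\ge 1$. The main obstacle is exactly this last ingredient. The crude Gershgorin estimate applied to the edge matrix $B^{\top}B$ (where $L=BB^{\top}$ for the oriented incidence matrix $B$) gives only the weaker $\lambda_n(H)\le\max_{uv\in E(H)}\{d_H(u)+d_H(v)\}$, which discards the common-neighbour savings and would merely produce $\lambda_n(\overline{G})\le 2(n-1)$. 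To recover the sharper union bound (equivalently, the $-|N_H(u)\cap N_H(v)|$ correction) I would argue on the Rayleigh quotient using the two coordinates of a top eigenvector largest in absolute value, or simply invoke the sharpened estimate as cited; this refinement is the technical heart of the statement. As a sanity check, equality $\lambda_2(G)=1$ is attained by the star $K_{1,n-1}$ (and more generally $\lambda_2(K_{a,b})=\min\{a,b\}$ for complete bipartite graphs of diameter two), confirming that the bound cannot be improved in general.
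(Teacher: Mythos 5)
There is nothing in the paper to compare your argument against: Lemma \ref{le2,9} is imported verbatim from \cite{D1} and the paper gives no proof at all, so your proposal has to be judged on its own. Judged so, it is essentially correct, and it follows what is almost certainly the route of the cited source itself. The complement reduction is sound: from $L(G)+L(\overline{G})=nI_n-J$ one gets $L(\overline{G})=nI_n-L(G)$ on $\mathbf{1}^{\perp}$, hence $\lambda_2(G)=n-\lambda_n(\overline{G})$ (this is the paper's Lemma \ref{le6,1}); and your translation of the diameter-$2$ hypothesis is correct: every edge $uv$ of $\overline{G}$ comes from a non-adjacent pair in $G$, whose common $G$-neighbour $w$ lies outside $N_{\overline{G}}(u)\cup N_{\overline{G}}(v)$, so $\bigl|N_{\overline{G}}(u)\cup N_{\overline{G}}(v)\bigr|\le n-1$. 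The one load-bearing ingredient you do not prove, namely $\lambda_n(H)\le \max_{uv\in E(H)}\bigl\{d_H(u)+d_H(v)-|N_H(u)\cap N_H(v)|\bigr\}$, is a genuine published theorem of Das (An improved upper bound for Laplacian graph eigenvalues, Linear Algebra Appl. 368 (2003) 269--278), so invoking it by citation legitimately closes the argument --- which is no worse than what the paper itself does for the whole lemma. Be aware, though, that your fallback plan of recovering that bound from a two-largest-coordinates Rayleigh-quotient argument would not suffice as sketched: that technique yields only the Anderson--Morley bound $d(u)+d(v)$, and the common-neighbour correction is precisely the nontrivial content of Das's theorem, so you are right that it is the technical heart and wrong to suggest it falls out of the standard eigenvector manipulation.

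Two small repairs you should make explicit. First, $\overline{G}$ need not be connected when $\mathrm{diam}(G)=2$ (for the star, $\overline{K_{1,\,n-1}}=K_1\cup K_{n-1}$, which is exactly your equality example), while bounds of this type are usually stated for connected graphs; this is harmless --- apply the bound to each component of $\overline{G}$, whose neighbourhoods coincide with those taken in $\overline{G}$, and note that isolated vertices contribute eigenvalue $0$ --- but it needs to be said, since otherwise the step $\lambda_n(\overline{G})\le n-1$ is not literally covered by the cited theorem. Second, the maximum over $E(\overline{G})$ must run over a nonempty set; this holds because diameter exactly $2$ forces $G\neq K_n$, so $\overline{G}$ has at least one edge. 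With these two remarks added, your proof is complete modulo the 2003 citation.
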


\begin{lemma}{\bf (\cite{G})}\label{le2,10} 
Let $uv$ be a cut edge of a graph $G$. Let $G-uv=G_1+G_2$, where $G_1$ and $G_2$ are the
components of $G-uv$, $G_1+G_2$ is the sum of $G_1$ and $G_2$, $u\in V(G_1)$ and $v\in V(G_2)$. Then
$$\phi(L(G))=\phi(L(G_1))\phi(L(G_2))-\phi(L(G_1))\phi(L_v(G_2))-\phi(L_u(G_1))\phi(L(G_2)).$$
\end{lemma}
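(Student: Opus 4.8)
The plan is to realize $L(G)$ as a rank-one perturbation of the block-diagonal Laplacian of the disjoint union $G_1+G_2$ and then invoke the matrix determinant lemma. Suppose $G_1$ has $n_1$ vertices and $G_2$ has $n_2$ vertices, so $n=n_1+n_2$. Order the vertices of $G$ so that those of $G_1$ come first and those of $G_2$ second, and let $e_u,e_v\in\mathbb{R}^{n}$ be the standard basis vectors at $u$ and $v$. Adding the cut edge $uv$ raises the degrees of $u$ and $v$ by one and inserts $-1$ in the $(u,v)$ and $(v,u)$ positions, so that
$$L(G)=\begin{pmatrix}L(G_1)&0\\0&L(G_2)\end{pmatrix}+(e_u-e_v)(e_u-e_v)^{T}.$$
Setting $w=e_u-e_v$ and $N=xI_n-\bigl(L(G_1)\oplus L(G_2)\bigr)$, this yields the key identity $xI_n-L(G)=N-ww^{T}$, where $N$ is block diagonal with blocks $A=xI_{n_1}-L(G_1)$ and $C=xI_{n_2}-L(G_2)$.

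Next I would apply the matrix determinant lemma in the form $\det(N-ww^{T})=\det(N)\bigl(1-w^{T}N^{-1}w\bigr)$. Since $N$ is block diagonal, $\det N=\phi(L(G_1))\phi(L(G_2))$ and $N^{-1}=A^{-1}\oplus C^{-1}$. Because $e_u$ lies in the first block and $e_v$ in the second, the cross terms cancel and $w^{T}N^{-1}w=(A^{-1})_{uu}+(C^{-1})_{vv}$. By Cramer's rule $(A^{-1})_{uu}$ equals the $(u,u)$ cofactor of $A$ divided by $\det A$; deleting row and column $u$ from $A=xI_{n_1}-L(G_1)$ produces exactly $xI_{n_1-1}-L_u(G_1)$, whence $(A^{-1})_{uu}=\phi(L_u(G_1))/\phi(L(G_1))$, and likewise $(C^{-1})_{vv}=\phi(L_v(G_2))/\phi(L(G_2))$. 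Substituting these and clearing denominators gives precisely
$$\phi(L(G))=\phi(L(G_1))\phi(L(G_2))-\phi(L_u(G_1))\phi(L(G_2))-\phi(L(G_1))\phi(L_v(G_2)).$$

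The only point requiring care---and the main obstacle---is that the matrix determinant lemma as used above presupposes $N$ invertible, i.e. $\det N=\phi(L(G_1))\phi(L(G_2))\neq 0$, which fails exactly when $x$ is a Laplacian eigenvalue of $G_1$ or of $G_2$. This excludes only finitely many values of $x$. Since both sides of the claimed identity are polynomials in $x$ that agree on the cofinite set where $N$ is nonsingular, they must agree identically. Alternatively one can avoid the invertibility issue altogether by expanding $\det(N-ww^{T})$ via multilinearity of the determinant in the two columns indexed by $u$ and $v$, which reproduces the same three terms directly; I would note this as a fallback but carry out the cleaner rank-one computation as the primary route.
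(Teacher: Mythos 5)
Your proof is correct, but note that the paper itself offers no proof of this lemma to compare against: it is quoted verbatim from reference \cite{G} (Guo), so the only question is whether your argument stands on its own, and it does. The rank-one identity $L(G)=\bigl(L(G_1)\oplus L(G_2)\bigr)+(e_u-e_v)(e_u-e_v)^{T}$ is exactly right for a cut edge, the block-diagonal structure correctly kills the cross terms in $w^{T}N^{-1}w$, the Cramer's-rule identification $(A^{-1})_{uu}=\phi(L_u(G_1))/\phi(L(G_1))$ matches the paper's definition of $L_v(G)$ as the principal submatrix, and your handling of the invertibility caveat (two polynomials in $x$ agreeing on a cofinite set are identical) is sound. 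One small polish: you can dispense with the generic-$x$ argument altogether by using the adjugate form of the matrix determinant lemma, $\det(N-ww^{T})=\det N-w^{T}\,\mathrm{adj}(N)\,w$, which holds for all $N$; here $\mathrm{adj}(N)_{uu}=\phi(L_u(G_1))\phi(L(G_2))$, $\mathrm{adj}(N)_{vv}=\phi(L(G_1))\phi(L_v(G_2))$, and the mixed entries $\mathrm{adj}(N)_{uv}$, $\mathrm{adj}(N)_{vu}$ vanish because deleting a row from one diagonal block and a column from the other leaves a matrix whose nonzero pattern cannot support any permutation. This yields the three-term formula in one step, which is essentially the multilinearity fallback you sketched, made precise.
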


\section{\large  The biharmonic index, the first Zagreb index and the forgotten topological index}

\begin{theorem}\label{th3,1} 
Let $G$ be a connected graph with $n$ vertices and $m$ edges. Then
$$BH(G)\leq \frac{n(n-1)^2}{4(2m+M_1(G))}\left(r_L(G)+\frac{1}{r_L(G)}\right)^{2}.$$
\end{theorem}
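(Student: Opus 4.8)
The plan is to apply the Pólya--Szegő inequality of Lemma~\ref{le2,1} to the nonzero Laplacian eigenvalues, after first rewriting the quantity $2m+M_1(G)$ as a spectral sum. The starting observation is the trace identity
$$\sum_{i=1}^{n}\lambda_i^2=\mathrm{tr}\big(L(G)^2\big)=\sum_{v\in V(G)}\big(d_v^2+d_v\big)=M_1(G)+2m,$$
where $d_v$ denotes the degree of $v$; the diagonal entry $(L(G)^2)_{vv}$ equals $d_v^2+d_v$ because vertex $v$ contributes $d_v^2$ from the diagonal of $L(G)$ and $1$ from each of its $d_v$ incident edges. Since $\lambda_1=0$, this reads $\sum_{i=2}^{n}\lambda_i^2=2m+M_1(G)$, so the denominator factor in the claimed bound is exactly the sum of squares of the nonzero eigenvalues, which is the right shape to feed into a Cauchy--Schwarz-type estimate.

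Next I would invoke Lemma~\ref{le2,1} with the positive $(n-1)$-tuples indexed by $i=2,\dots,n$ given by $a_i=\tfrac{1}{\lambda_i}$ and $b_i=\lambda_i$ (both positive since $G$ is connected). With this choice one has
$$\sum_{i=2}^{n}a_i^2=\frac{BH(G)}{n},\qquad \sum_{i=2}^{n}b_i^2=2m+M_1(G),\qquad \sum_{i=2}^{n}a_ib_i=n-1,$$
while the ratios are $a_i/b_i=1/\lambda_i^2$. Because $0<\lambda_2\le\cdots\le\lambda_n$, the map $\lambda\mapsto 1/\lambda^2$ is decreasing, so $a=\min_i(a_i/b_i)=1/\lambda_n^2$ and $A=\max_i(a_i/b_i)=1/\lambda_2^2$. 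Substituting these four quantities into Lemma~\ref{le2,1} yields
$$\frac{\big(BH(G)/n\big)\,\big(2m+M_1(G)\big)}{(n-1)^2}\le \frac{(a+A)^2}{4aA}.$$

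It then remains to collapse the right-hand side into the Laplacian spectral ratio $r_L(G)=\lambda_n/\lambda_2$. A direct computation gives
$$\frac{(a+A)^2}{4aA}=\frac{\lambda_2^2\lambda_n^2}{4}\left(\frac{1}{\lambda_n^2}+\frac{1}{\lambda_2^2}\right)^2=\frac{1}{4}\left(\frac{\lambda_2}{\lambda_n}+\frac{\lambda_n}{\lambda_2}\right)^2=\frac{1}{4}\left(r_L(G)+\frac{1}{r_L(G)}\right)^2.$$
Rearranging the previous display and multiplying through by $n$ produces exactly the asserted upper bound on $BH(G)$.

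The argument is short, and no single step is a serious obstacle; the only points requiring care are the identification of the $\min$ and $\max$ of the ratios $1/\lambda_i^2$ (which rests solely on the monotonicity of $\lambda\mapsto\lambda^{-2}$ on the positive reals) and the algebraic simplification of $(a+A)^2/(4aA)$ into $\tfrac14\big(r_L(G)+r_L(G)^{-1}\big)^2$. The genuine content lies in the two bookkeeping insights at the outset—recognizing $2m+M_1(G)$ as $\sum_{i\ge 2}\lambda_i^2$ via the trace identity, and pairing $a_i=\lambda_i^{-1}$ with $b_i=\lambda_i$ so that the cross-sum $\sum_{i=2}^{n}a_ib_i$ collapses to the constant $n-1$, which is precisely what converts the Pólya--Szegő bound into a statement about $BH(G)$.
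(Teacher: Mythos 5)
Your proof is correct and follows essentially the same route as the paper: both apply the P\'olya--Szeg\H{o} inequality of Lemma~\ref{le2,1} to the tuples $\lambda_i$ and $1/\lambda_i$ ($2\le i\le n$), use the identity $\sum_{i=2}^{n}\lambda_i^2=2m+M_1(G)$, and simplify $(a+A)^2/(4aA)$ into $\tfrac14\left(r_L(G)+r_L(G)^{-1}\right)^2$. Swapping the roles of $a_i$ and $b_i$ relative to the paper is immaterial, since both sides of the lemma's inequality are invariant under this exchange.
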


\begin{proof}  In this proof we use Lemma \ref{le2,1} with $a_i=\lambda_i$ and $b_i=\frac{1}{\lambda_i}$ for $2\leq i \leq n$. Then $a=\lambda_2^2$ and $A=\lambda_n^2$. Thus
$$\frac{\left(\sum\limits_{i=2}^{n}\lambda_i^2\right)\left(\sum\limits_{i=2}^{n}\frac{1}{\lambda_i^2}\right)}{(n-1)^2}\leq \frac{(\lambda_2^2+\lambda_n^2)^2}{4\lambda_2^2\lambda_n^2},$$
Since $\sum_{i=2}^{n}\lambda_i^2=2m+M_1(G)$, we have
$$\frac{(2m+M_1(G))BH(G)}{n(n-1)^2}\leq \frac{(\lambda_2^2+\lambda_n^2)^2}{4\lambda_2^2\lambda_n^2},$$
that is,
$$BH(G)\leq \frac{n(n-1)^2}{4(2m+M_1(G))}\left(\frac{\lambda_2}{\lambda_n}+\frac{\lambda_n}{\lambda_2}\right)^{2}.$$
This completes the proof. $\Box$
\end{proof}

\begin{theorem}\label{th3,2} 
Let $G$ be a connected graph with $n$ vertices and $m$ edges. Then
$$BH(G)\leq \frac{n(n-1)^2}{4(2m+M_1(G))}\left(4+\left(r_L(G)-\frac{1}{r_L(G)}\right)^{2}\right).$$
\end{theorem}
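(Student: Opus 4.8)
The plan is to mirror the proof of Theorem \ref{th3,1}, but to invoke the difference-type inequality of Lemma \ref{le2,2} in place of the ratio-type inequality of Lemma \ref{le2,1}. Concretely, I would again set $a_i=\lambda_i$ and $b_i=\frac{1}{\lambda_i}$ for $2\leq i\leq n$, so that the ratios are $\frac{a_i}{b_i}=\lambda_i^2$ and hence $a=\lambda_2^2$ and $A=\lambda_n^2$. This is legitimate because in a connected graph $\lambda_i>0$ for $2\leq i\leq n$, so both tuples are positive as the lemma requires.

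The three sums that appear are then immediate. As recorded in the proof of Theorem \ref{th3,1}, $\sum_{i=2}^{n}a_i^2=\sum_{i=2}^{n}\lambda_i^2=2m+M_1(G)$; by the defining formula for the biharmonic index, $\sum_{i=2}^{n}b_i^2=\sum_{i=2}^{n}\frac{1}{\lambda_i^2}=\frac{BH(G)}{n}$; and the cross term collapses to $\sum_{i=2}^{n}a_ib_i=\sum_{i=2}^{n}1=n-1$. Substituting these into Lemma \ref{le2,2} gives
$$\frac{(2m+M_1(G))\,BH(G)}{n}-(n-1)^2\leq \frac{(\lambda_n^2-\lambda_2^2)^2}{4\lambda_2^2\lambda_n^2}(n-1)^2.$$

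It then remains only to rewrite the right-hand side in terms of $r_L(G)$. Since $r_L(G)=\frac{\lambda_n}{\lambda_2}$, one has $r_L(G)-\frac{1}{r_L(G)}=\frac{\lambda_n^2-\lambda_2^2}{\lambda_2\lambda_n}$, so that $\frac{(\lambda_n^2-\lambda_2^2)^2}{4\lambda_2^2\lambda_n^2}=\frac{1}{4}\left(r_L(G)-\frac{1}{r_L(G)}\right)^2$. Carrying the $(n-1)^2$ back across and factoring out $\frac14$ converts the bracket into $4+\left(r_L(G)-\frac{1}{r_L(G)}\right)^2$, and solving for $BH(G)$ yields exactly the claimed inequality. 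There is essentially no obstacle here beyond this bookkeeping; the work is entirely in the choice of substitution, which is forced by the structure of $BH(G)$. I would also remark that expanding both brackets shows $4+\left(r_L-\frac{1}{r_L}\right)^2=\left(r_L+\frac{1}{r_L}\right)^2$, so this bound coincides numerically with that of Theorem \ref{th3,1}; the interest of the present argument is that the same estimate is reached from the additive rather than the multiplicative inequality.
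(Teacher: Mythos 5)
Your proposal is correct and follows essentially the same route as the paper's own proof: Lemma \ref{le2,2} applied with $a_i=\lambda_i$, $b_i=\frac{1}{\lambda_i}$ for $2\leq i\leq n$, the identification $\sum_{i=2}^{n}\lambda_i^2=2m+M_1(G)$, $\sum_{i=2}^{n}\frac{1}{\lambda_i^2}=\frac{BH(G)}{n}$, $\sum_{i=2}^{n}a_ib_i=n-1$, and the same final rearrangement in terms of $r_L(G)$. Your closing observation that $4+\left(r_L-\frac{1}{r_L}\right)^2=\left(r_L+\frac{1}{r_L}\right)^2$, so the bound coincides numerically with that of Theorem \ref{th3,1}, is a correct extra remark that the paper does not make.
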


\begin{proof}  In this proof we use Lemma \ref{le2,2} with $a_i=\lambda_i$ and $b_i=\frac{1}{\lambda_i}$ for $2\leq i \leq n$. Then $a=\lambda_2^2$ and $A=\lambda_n^2$. Thus
$$\left(\sum\limits_{i=2}^{n}\lambda_i^2\right)\left(\sum\limits_{i=2}^{n}\frac{1}{\lambda_i^2}\right)-(n-1)^2\leq \frac{(\lambda_n^2-\lambda_2^2)^2}{4\lambda_2^2\lambda_n^2}(n-1)^2.$$
Since $\sum_{i=2}^{n}\lambda_i^2=2m+M_1(G)$, we have
$$\frac{2m+M_1(G)}{n}BH(G)-(n-1)^2\leq \frac{(\lambda_n^2-\lambda_2^2)^2}{4\lambda_2^2\lambda_n^2}(n-1)^2,$$
that is,
$$BH(G)\leq \frac{n(n-1)^2}{4(2m+M_1(G))}\left(4+\left(\frac{\lambda_n}{\lambda_2}-\frac{\lambda_2}{\lambda_n}\right)^{2}\right).$$
This completes the proof. $\Box$
\end{proof}

\begin{theorem}\label{th3,3} 
Let $p$ be a positive real number and $G$ be a connected graph with $n$ vertices and $m$ edges. Then
$$BH(G)\geq n\left(\frac{(2m)^{p+1}}{\sum\limits_{i=2}^{n}\lambda_i^{3p+1}}\right)^{\frac{1}{p}}$$
with equality if and only if $G\cong K_n$.
\end{theorem}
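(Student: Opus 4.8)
The plan is to reduce the statement to a single application of Radon's inequality (Lemma \ref{le2,3}). Recall that $BH(G)=n\sum_{i=2}^{n}\lambda_i^{-2}$, and that $\sum_{i=2}^{n}\lambda_i$ equals the trace of $L(G)$, namely $2m$, because $\lambda_1=0$. Writing $S=\sum_{i=2}^{n}\lambda_i^{-2}$, the claimed bound is equivalent, after dividing by $n$ and raising both sides to the $p$-th power (legitimate because $p>0$ and every quantity involved is positive), to
$$S^{p}\,\sum_{i=2}^{n}\lambda_i^{3p+1}\ \geq\ (2m)^{p+1}=\Big(\sum_{i=2}^{n}\lambda_i\Big)^{p+1}.$$

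The key step is to choose the right substitution in Lemma \ref{le2,3}. I would set $a_i=\lambda_i$ and $b_i=\lambda_i^{-2}$ for $2\leq i\leq n$. Then $a_i^{p+1}/b_i^{p}=\lambda_i^{p+1}\lambda_i^{2p}=\lambda_i^{3p+1}$, so the left-hand side of Radon's inequality becomes exactly $\sum_{i=2}^{n}\lambda_i^{3p+1}$; meanwhile $\sum a_i=2m$ and $\sum b_i=S$, so its right-hand side becomes $(2m)^{p+1}/S^{p}$. Lemma \ref{le2,3} therefore yields $\sum_{i=2}^{n}\lambda_i^{3p+1}\geq (2m)^{p+1}/S^{p}$, which is precisely the displayed inequality above. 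Undoing the algebra — rearranging, taking the positive $p$-th root, and multiplying by $n$ — recovers the stated bound.

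For the equality case, I would invoke the sharpness clause of Lemma \ref{le2,3}: equality holds if and only if $a_i/b_i=\lambda_i^{3}$ is constant in $i$, i.e. if and only if $\lambda_2=\cdots=\lambda_n$. It then remains to identify the connected graphs whose nonzero Laplacian eigenvalues all coincide. Writing $\lambda_2=\cdots=\lambda_n=c$, the Laplacian must be $L(G)=c\big(I_n-\tfrac{1}{n}J_n\big)$ (with $J_n$ the $n\times n$ all-ones matrix), so every diagonal entry, being a vertex degree, equals $c(n-1)/n$ and every off-diagonal entry equals $-c/n$; since $G$ is a connected simple graph these off-diagonal entries can only be $0$ or $-1$, which forces $c=n$ and hence $G\cong K_n$. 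Conversely, $K_n$ has Laplacian spectrum $\{0,n^{(n-1)}\}$ and clearly attains equality.

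I do not anticipate a genuine obstacle here: the only real decision is the substitution $a_i=\lambda_i,\ b_i=\lambda_i^{-2}$, and once it is made the computation is immediate. The mildly delicate point is the equality analysis — specifically, arguing that ``all nonzero Laplacian eigenvalues equal'' pins down $K_n$ among connected graphs rather than merely a larger regular or strongly regular family — but reconstructing $L(G)$ from its spectrum as above settles this cleanly.
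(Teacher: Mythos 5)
Your proposal is correct and follows essentially the same route as the paper: both apply Lemma \ref{le2,3} with the substitution $a_i=\lambda_i$, $b_i=\lambda_i^{-2}$, rearrange to get the stated bound, and read off equality from the constancy of $a_i/b_i=\lambda_i^{3}$. The only difference is that you spell out why constant nonzero Laplacian eigenvalues force $G\cong K_n$ (via $L(G)=c\left(I_n-\tfrac{1}{n}J_n\right)$), a step the paper asserts without detail.
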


\begin{proof} In this proof we use Lemma \ref{le2,3} with $a_i=\lambda_i$ and $b_i=\frac{1}{\lambda_i^2}$ for $2\leq i \leq n$ . Then we have
$$\sum\limits_{i=2}^{n}\lambda_i^{3p+1}\geq \frac{\left(\sum\limits_{i=2}^{n}\lambda_i\right)^{p+1}}{\left(\sum\limits_{i=2}^{n}\frac{1}{\lambda_i^2}\right)^{p}}.$$
Since $\sum_{i=2}^{n}\lambda_i=2m$, we have
$$BH(G)\geq n\left(\frac{(2m)^{p+1}}{\sum\limits_{i=2}^{n}\lambda_i^{3p+1}}\right)^{\frac{1}{p}}$$
with equality if and only if $\lambda_2^3=\cdots=\lambda_n^3$, that is $G\cong K_n$.
This completes the proof. $\Box$
\end{proof}

\begin{corollary}\label{cor3,1} 
Let $G$ be a connected graph with $n$ vertices and $m$ edges. Then
$$BH(G)\geq \frac{16nm^4}{[2m+M_1(G)]^3}$$
with equality if and only if $G\cong K_n$.
\end{corollary}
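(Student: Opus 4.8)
The plan is to obtain this as an immediate specialization of Theorem \ref{th3,3}. Since Theorem \ref{th3,3} holds for every positive real $p$, the whole task reduces to choosing the value of $p$ that converts the abstract lower bound into the explicit expression involving $m$ and $M_1(G)$. First I would look at the exponent $3p+1$ appearing in the sum $\sum_{i=2}^{n}\lambda_i^{3p+1}$ and ask for which $p$ this becomes a quantity I can evaluate in closed form. The only nontrivial power-sum of Laplacian eigenvalues that is already expressed through $m$ and $M_1(G)$ in this paper is $\sum_{i=2}^{n}\lambda_i^2 = 2m + M_1(G)$, the identity used in the proofs of Theorems \ref{th3,1} and \ref{th3,2}. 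Setting $3p+1 = 2$ forces $p = \frac{1}{3}$.

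With $p = \frac{1}{3}$, I would substitute directly into the bound of Theorem \ref{th3,3}. The exponent $p+1$ becomes $\frac{4}{3}$, the reciprocal $\frac{1}{p}$ becomes $3$, and the denominator sum becomes $\sum_{i=2}^{n}\lambda_i^{2} = 2m + M_1(G)$. This yields
$$
BH(G) \geq n\left(\frac{(2m)^{4/3}}{2m+M_1(G)}\right)^{3}
= n\,\frac{(2m)^{4}}{[2m+M_1(G)]^{3}}.
$$
Using $(2m)^4 = 16m^4$ then gives exactly $BH(G) \geq \dfrac{16nm^4}{[2m+M_1(G)]^3}$, as required.

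Finally, the equality characterization transfers without extra work. Theorem \ref{th3,3} states that equality holds if and only if $\lambda_2 = \cdots = \lambda_n$, equivalently $G \cong K_n$, and this condition is independent of the chosen $p$; hence it is inherited verbatim by the corollary. I do not anticipate any genuine obstacle here: the argument is a one-line substitution once the exponent-matching value $p = \frac{1}{3}$ is identified, and the only point requiring a moment of care is recalling the trace identity $\sum_{i=2}^{n}\lambda_i^2 = 2m + M_1(G)$ that makes $3p+1 = 2$ the natural choice.
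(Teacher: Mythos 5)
Your proposal is correct and coincides with the paper's own proof: both set $p=\tfrac{1}{3}$ in Theorem~\ref{th3,3}, invoke the identity $\sum_{i=2}^{n}\lambda_i^{2}=2m+M_1(G)$, and inherit the equality condition $G\cong K_n$ directly. There is nothing to add.
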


\begin{proof} Let $p=\frac{1}{3}$. Since $\sum_{i=2}^{n}\lambda_i^{2}=2m+M_1(G)$, by Theorem \ref{th3,3}, we have
$$BH(G)\geq n\left(\frac{(2m)^{4/3}}{\sum\limits_{i=2}^{n}\lambda_i^{2}}\right)^{3}=\frac{16nm^4}{[2m+M_1(G)]^3}$$
with equality if and only if $G\cong K_n$. This completes the proof. $\Box$
\end{proof}

\begin{corollary}\label{cor3,2} 
Let $G$ be a connected graph with $n$ vertices, $m$ edges and $t(G)$ triangles. Then
$$BH(G)\geq \sqrt{\frac{32n^2m^5}{[3M_1(G)+F(G)+6t(G)]^3}}$$
with equality if and only if $G\cong K_n$.
\end{corollary}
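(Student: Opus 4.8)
The plan is to specialize Theorem \ref{th3,3} to the exponent that turns the denominator into a \emph{cubic} power sum of the nonzero Laplacian eigenvalues, exactly as Corollary \ref{cor3,1} did for the quadratic power sum. Taking $p=\frac{2}{3}$ gives $3p+1=3$, $p+1=\frac{5}{3}$ and $\frac1p=\frac32$, so Theorem \ref{th3,3} reads
\[
BH(G)\geq n\left(\frac{(2m)^{5/3}}{\sum_{i=2}^{n}\lambda_i^{3}}\right)^{3/2}
=\frac{n\,(2m)^{5/2}}{\Big(\sum_{i=2}^{n}\lambda_i^{3}\Big)^{3/2}}
=\sqrt{\frac{2^{5}\,n^{2}m^{5}}{\big(\sum_{i=2}^{n}\lambda_i^{3}\big)^{3}}},
\]
using $(2m)^{5/2}=2^{5/2}m^{5/2}$ and $2^{5}=32$. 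Thus everything reduces to evaluating the cubic power sum $\sum_{i=2}^{n}\lambda_i^{3}$ in terms of $M_1$, $F$ and $t$.

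The key computational step is the trace identity $\sum_{i=2}^{n}\lambda_i^{3}=\mathrm{tr}(L^{3})$ (recall $\lambda_1=0$), obtained by expanding $(D-A)^{3}$ and taking traces. Here $\mathrm{tr}(D^{3})=F(G)$; every mixed term carrying a single factor $A$ sandwiched between diagonal factors has vanishing diagonal and contributes $0$; the three terms of the form $DA^{2},ADA,A^{2}D$ each have trace $M_1(G)$ by cyclicity together with $(A^{2})_{ii}=d_i$; and $\mathrm{tr}(A^{3})=6t(G)$ counts closed walks of length three. Collecting signs yields the sharp value $\sum_{i=2}^{n}\lambda_i^{3}=F(G)+3M_1(G)-6t(G)$, whence $BH(G)\geq\sqrt{32n^{2}m^{5}/[\,3M_1(G)+F(G)-6t(G)\,]^{3}}$. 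Since $t(G)\geq 0$ and $3M_1(G)+F(G)-6t(G)=\sum_{i=2}^{n}\lambda_i^{3}>0$ for a connected graph, replacing $-6t(G)$ by $+6t(G)$ only enlarges the (positive) denominator and hence weakens the fraction; this last monotonicity step delivers precisely the stated bound $BH(G)\geq\sqrt{32n^{2}m^{5}/[\,3M_1(G)+F(G)+6t(G)\,]^{3}}$.

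The step I expect to be the main obstacle is the equality analysis. Equality in Theorem \ref{th3,3}, i.e.\ in Radon's inequality of Lemma \ref{le2,3}, forces $\lambda_2=\cdots=\lambda_n$, which for a connected graph occurs exactly when $G\cong K_n$, and this is the intended source of the characterization. The delicacy is that the final monotonicity step $-6t(G)\le +6t(G)$ is tight only when $t(G)=0$, so the two inequalities are simultaneously sharp only for triangle-free equality graphs (e.g.\ $n\le 2$); for $K_n$ with $n\ge 3$ one has $t(K_n)=\binom{n}{3}>0$, so the genuinely attained bound is the $-6t(G)$ form. I would therefore carry the argument through with the sharp $-6t(G)$ denominator to justify the equality statement at $K_n$, and present the $+6t(G)$ form as the weakened inequality obtained from $t(G)\ge 0$, flagging explicitly where the equality case is and is not preserved.
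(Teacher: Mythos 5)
Your specialization $p=\tfrac23$ of Theorem~\ref{th3,3} is exactly the paper's route, but your handling of the cubic power sum is correct where the paper's is not. The paper's proof rests on the identity $\sum_{i=2}^{n}\lambda_i^{3}=3M_1(G)+F(G)+6t(G)$, from which it concludes equality if and only if $G\cong K_n$. The correct identity for the Laplacian $L=D-A$ is the one you derived, $\mathrm{tr}(L^{3})=F(G)+3M_1(G)-6t(G)$: in the expansion $\sum_{i,j,k}L_{ij}L_{jk}L_{ki}$ the triples with $i,j,k$ distinct contribute $(-1)^{3}$ per ordered triangle, giving $-6t(G)$; the $+6t(G)$ version is the trace identity for the \emph{signless} Laplacian $D+A$. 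A one-line check on $K_3$ (eigenvalues $0,3,3$) gives $\sum_{i=2}^{n}\lambda_i^{3}=54=3\cdot 12+24-6$, not $66$.

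The consequences are exactly as you flag them. The inequality as printed in the corollary is still true, by your monotonicity step: $t(G)\ge 0$ makes $3M_1(G)+F(G)+6t(G)\ge 3M_1(G)+F(G)-6t(G)=\sum_{i=2}^{n}\lambda_i^{3}>0$, which only weakens the sharp bound. But the printed equality characterization is false for $n\ge 3$: for $K_3$ one has $BH(K_3)=\tfrac23$, while $\sqrt{32n^{2}m^{5}/[3M_1+F+6t]^{3}}=\sqrt{69984/287496}\approx 0.493$, a strict inequality; equality at $K_n$ holds precisely in your sharp $-6t(G)$ form (there $\sqrt{69984/157464}=\tfrac23$ for $K_3$). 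Your equality analysis --- Radon's inequality in Lemma~\ref{le2,3} is tight iff $\lambda_2=\cdots=\lambda_n$, i.e.\ $G\cong K_n$, and the monotonicity step is tight only when $t(G)=0$ --- is the right repair. The corollary should either be restated with denominator $[3M_1(G)+F(G)-6t(G)]^{3}$ and equality iff $G\cong K_n$, or keep the $+6t(G)$ form and delete the equality claim; as written, the paper's statement and proof are in error, and your proposal identifies and fixes the error correctly.
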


\begin{proof} Let $p=\frac{2}{3}$. Since $\sum_{i=2}^{n}\lambda_i^{3}=3M_1(G)+F(G)+6t(G)$, by Theorem \ref{th3,3}, we have
$$BH(G)\geq n\left(\frac{(2m)^{5/3}}{\sum\limits_{i=2}^{n}\lambda_i^{3}}\right)^{3/2}=\sqrt{\frac{32n^2m^5}{[3M_1(G)+F(G)+6t(G)]^3}}$$
with equality if and only if $G\cong K_n$. This completes the proof. $\Box$
\end{proof}

\section{\large  The biharmonic index and Kirchhoff index }

\begin{theorem}\label{th4,1} 
Let $G$ be a connected graph with $n$ vertices. Then
$$BH(G)\leq \left(\frac{1}{\lambda_2}+\frac{1}{\lambda_n}\right)Kf(G)-n(n-1)\frac{1}{\lambda_2\lambda_n}$$
with equality if and only if for some $r\in \{2,\ldots,n\}$, $\lambda_2=\cdots=\lambda_r$ and
$\lambda_{r+1}=\cdots=\lambda_n$.
\end{theorem}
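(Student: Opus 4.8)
The equality condition in the statement coincides exactly with that of Lemma~\ref{le2,4}, which makes Lemma~\ref{le2,4} the natural engine for the proof; the plan is to apply it to the reciprocals of the nonzero Laplacian eigenvalues. I would set $a_j=\frac{1}{\lambda_{j+1}}$ for $j=1,2,\ldots,n-1$. Since $\lambda_i$ denotes the $i$-th smallest eigenvalue we have $\lambda_2\leq\lambda_3\leq\cdots\leq\lambda_n$, and connectedness gives $\lambda_2>0$; hence $a_1=\frac{1}{\lambda_2}\geq a_2\geq\cdots\geq a_{n-1}=\frac{1}{\lambda_n}>0$, so the hypotheses of Lemma~\ref{le2,4} are met, now with $n-1$ nonnegative reals in place of $n$.

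With this substitution the three aggregates in Lemma~\ref{le2,4} become familiar quantities: the linear sum is $\sum_{j=1}^{n-1}a_j=\sum_{i=2}^{n}\frac{1}{\lambda_i}=Kf(G)$ by the Gutman--Mohar formula, the sum of squares is $\sum_{j=1}^{n-1}a_j^2=\sum_{i=2}^{n}\frac{1}{\lambda_i^2}=\frac{1}{n}BH(G)$ by the definition of the biharmonic index, and the extreme terms give $a_1+a_{n-1}=\frac{1}{\lambda_2}+\frac{1}{\lambda_n}$ together with $a_1a_{n-1}=\frac{1}{\lambda_2\lambda_n}$. Plugging these into the inequality $(a_1+a_{n-1})\sum_{j=1}^{n-1}a_j\geq\sum_{j=1}^{n-1}a_j^2+(n-1)\,a_1a_{n-1}$ supplied by Lemma~\ref{le2,4} yields
$$\left(\frac{1}{\lambda_2}+\frac{1}{\lambda_n}\right)Kf(G)\geq\frac{1}{n}BH(G)+(n-1)\frac{1}{\lambda_2\lambda_n}.$$
Clearing the normalising factor $n$ and isolating $BH(G)$ then produces the asserted bound, and the equality characterisation transfers back through $a_j=1/\lambda_{j+1}$: equality in Lemma~\ref{le2,4} means the $a_j$ form two constant blocks, which says precisely that $\lambda_2=\cdots=\lambda_r$ and $\lambda_{r+1}=\cdots=\lambda_n$ for some $r\in\{2,\ldots,n\}$.

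The step demanding the most care---and the only genuine source of error---is the bookkeeping of the index shift against the normalising factor $n$ in $BH(G)$. Lemma~\ref{le2,4} is invoked for the $n-1$ reciprocals running from $1/\lambda_2$ to $1/\lambda_n$, so the lemma's ``$n$'' is our ``$n-1$'', which is what delivers the coefficient $(n-1)$ on the cross term $1/(\lambda_2\lambda_n)$; meanwhile the factor $n$ linking $\sum_{i=2}^{n}\lambda_i^{-2}$ to $BH(G)$ is what scales the remaining terms. Keeping these two constants straight is all that stands between the lemma and the final inequality, so I would verify the constants against a transparent instance such as $K_n$ (where all $\lambda_i$ agree and equality should hold with $r=n$) before committing to the stated form.
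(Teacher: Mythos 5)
Your approach is identical to the paper's own proof: both apply Lemma~\ref{le2,4} to the $n-1$ reciprocals $1/\lambda_2\geq\cdots\geq 1/\lambda_n$, and your verification of the lemma's hypotheses and your transfer of the equality condition back to the eigenvalues are both fine. The genuine problem is exactly the factor-of-$n$ bookkeeping you flagged as the dangerous step, which your write-up then resolves incorrectly. You identify $\sum_{i=2}^{n}1/\lambda_i$ with $Kf(G)$, following the formula quoted in the paper's introduction. Under that identification your displayed inequality
$$\left(\frac{1}{\lambda_2}+\frac{1}{\lambda_n}\right)Kf(G)\geq\frac{1}{n}BH(G)+(n-1)\frac{1}{\lambda_2\lambda_n}$$
is correct, but multiplying it through by $n$ gives $BH(G)\leq n\left(\frac{1}{\lambda_2}+\frac{1}{\lambda_n}\right)Kf(G)-n(n-1)\frac{1}{\lambda_2\lambda_n}$: the factor $n$ lands on the Kirchhoff term as well as on the cross term, so this is \emph{not} the asserted bound.

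The resolution is that the eigenvalue formula in the paper's introduction is misquoted: the actual Gutman--Mohar identity is $Kf(G)=n\sum_{i=2}^{n}1/\lambda_i$, where $Kf(G)$ is (as the paper primarily defines it) the sum of resistance distances over all pairs of vertices. With the correct identity you have $\sum_{j}a_j=Kf(G)/n$, your inequality becomes $\left(\frac{1}{\lambda_2}+\frac{1}{\lambda_n}\right)\frac{Kf(G)}{n}\geq\frac{1}{n}BH(G)+(n-1)\frac{1}{\lambda_2\lambda_n}$, and multiplying by $n$ yields precisely the theorem. The $K_n$ test you proposed but did not carry out would have caught this: $BH(K_n)=(n-1)/n$, and since the resistance distance between any two vertices of $K_n$ is $2/n$, we get $Kf(K_n)=n-1$, which gives equality in the stated bound; whereas with your value $\sum_{i=2}^{n}1/n=(n-1)/n$ playing the role of $Kf(K_n)$, the stated right-hand side equals $(n-1)(2-n)/n^2<0$ for $n\geq 3$, so the theorem would be false as you have interpreted it. (In fairness, the paper's own proof is sloppy at the same spot: it silently replaces $n\sum_{i=2}^{n}1/\lambda_i$ by $Kf(G)$, contradicting its introduction, and writes $nBH(G)$ where $BH(G)$ is meant; your derivation and the paper's have complementary slips around the same constant.)
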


\begin{proof} By Lemma \ref{le2,4}, we have
$$\left(\frac{1}{\lambda_2}+\frac{1}{\lambda_n}\right)\left(\frac{1}{\lambda_2}+\cdots+\frac{1}{\lambda_n}\right)\geq \frac{1}{\lambda_2^2}+\cdots+\frac{1}{\lambda_n^2}+(n-1)\frac{1}{\lambda_2\lambda_n},$$
that is,
$$n\left(\frac{1}{\lambda_2}+\frac{1}{\lambda_n}\right)\left(\frac{1}{\lambda_2}+\cdots+\frac{1}{\lambda_n}\right)\geq n\left(\frac{1}{\lambda_2^2}+\cdots+\frac{1}{\lambda_n^2}\right)+n(n-1)\frac{1}{\lambda_2\lambda_n},$$
that is,
$$\left(\frac{1}{\lambda_2}+\frac{1}{\lambda_n}\right)Kf(G)\geq nBH(G)+n(n-1)\frac{1}{\lambda_2\lambda_n},$$
that is,
$$BH(G)\leq \left(\frac{1}{\lambda_2}+\frac{1}{\lambda_n}\right)Kf(G)-n(n-1)\frac{1}{\lambda_2\lambda_n}$$
with equality if and only if for some $r\in \{2,\ldots,n\}$, $\lambda_2=\cdots=\lambda_r$ and
$\lambda_{r+1}=\cdots=\lambda_n$. This completes the proof. $\Box$
\end{proof}

\begin{theorem}\label{th4,2} 
Let $G$ be a connected graph with $n\geq 3$ vertices. Then
$$\frac{Kf^2(G)}{n(n-2)}-\frac{n(n-1)}{n-2}\left(\frac{1}{n\tau(G)}\right)^{\frac{2}{n-1}}\leq BH(G)\leq \frac{Kf^2(G)}{n}-n(n-1)(n-2)\left(\frac{1}{n\tau(G)}\right)^{\frac{2}{n-1}}.$$
\end{theorem}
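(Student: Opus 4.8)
The plan is to feed all three quantities in the statement into the two-sided Kober-type estimate of Lemma~\ref{le2,5}, so that a single application produces both bounds at once. The first task is purely notational: rewrite everything through the nonzero Laplacian eigenvalues $\lambda_2,\ldots,\lambda_n$. By definition $\sum_{i=2}^n \lambda_i^{-2}=BH(G)/n$ and $\sum_{i=2}^n \lambda_i^{-1}=Kf(G)$. The only genuinely external ingredient is $\tau(G)$: by the Matrix--Tree theorem $\prod_{i=2}^n \lambda_i = n\,\tau(G)$, so $\prod_{i=2}^n \lambda_i^{-1} = 1/(n\tau(G))$. This is the identity that turns the geometric-mean term of Lemma~\ref{le2,5} into the factor $\left(1/(n\tau(G))\right)^{2/(n-1)}$ appearing in the statement.

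With these reductions in hand I would apply Lemma~\ref{le2,5} to the $N=n-1$ nonnegative numbers $a_i=\lambda_i^{-2}$, $2\le i\le n$. The one clever choice in the argument is the square root: since $\sqrt{a_i}=\lambda_i^{-1}$, the quantity $\Phi$ of the lemma becomes a clean combination of the target invariants,
$$\Phi = (n-1)\sum_{i=2}^n \lambda_i^{-2} - \left(\sum_{i=2}^n \lambda_i^{-1}\right)^2 = \frac{n-1}{n}\,BH(G) - Kf^2(G).$$
At the same time $\sum_{i=2}^n a_i = BH(G)/n$ and $\left(\prod_{i=2}^n a_i\right)^{1/(n-1)} = \left(1/(n\tau(G))\right)^{2/(n-1)}$, so both sides of Lemma~\ref{le2,5} are now expressed entirely through $BH(G)$, $Kf(G)$ and $\tau(G)$.

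Finally I would read off the two inequalities. The lower estimate of Lemma~\ref{le2,5} reads $\sum a_i - (n-1)(\prod a_i)^{1/(n-1)} \le \Phi$; substituting and isolating $BH(G)$ exploits the fact that $\Phi$ carries $BH(G)$ with coefficient $(n-1)/n$ while the left side carries it with coefficient $1/n$, leaving a net $(n-2)/n$, after which dividing by $n-2>0$ (legitimate since $n\ge 3$) yields the lower bound. The upper estimate $\Phi \le (n-1)(n-2)\big[\tfrac{1}{n-1}\sum a_i - (\prod a_i)^{1/(n-1)}\big]$ is rearranged identically, now with the combinatorial factor $(n-1)(n-2)$, to give the upper bound.

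The argument contains no hard analytic step; it is entirely a matter of substituting into a known inequality. I expect the sole obstacle to be bookkeeping: keeping the factor $n$ in $BH(G)=n\sum\lambda_i^{-2}$ consistent throughout the rearrangement, and using the correct normalization $\prod_{i=2}^n\lambda_i=n\tau(G)$ rather than $\prod\lambda_i=\tau(G)$. The coefficient of the $Kf^2(G)$ term, produced when $BH(G)$ is isolated, is exactly where a stray factor of $n$ is easiest to introduce, so as a correctness check I would evaluate both bounds on $K_n$ — where all $a_i$ coincide and Lemma~\ref{le2,5} holds with equality, giving $BH(K_n)=Kf(K_n)=(n-1)/n$ — since this instance pins down the constants and exposes any misplaced power of $n$.
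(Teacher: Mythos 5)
Your plan is in substance identical to the paper's proof: apply Lemma \ref{le2,5} to the $n-1$ numbers $a_i=\lambda_i^{-2}$, use $\sqrt{a_i}=\lambda_i^{-1}$ to make $\Phi$ a combination of $BH$ and $Kf$, convert the geometric mean via the Matrix--Tree identity $\prod_{i=2}^n\lambda_i=n\tau(G)$, and rearrange. However, there is a genuine error in the one place you yourself flagged as dangerous: the normalization of the Kirchhoff index. The coefficients in the statement of Theorem \ref{th4,2} are only correct under the Gutman--Mohar formula $Kf(G)=n\sum_{i=2}^{n}\lambda_i^{-1}$, which is what the paper's proof actually uses (so that $\sum_{i=2}^n\lambda_i^{-1}=Kf(G)/n$ and $\Phi=\frac{n-1}{n}BH(G)-\frac{1}{n^2}Kf^2(G)$). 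You instead set $\sum_{i=2}^{n}\lambda_i^{-1}=Kf(G)$ (a typo that, to be fair, appears in the paper's own introduction), which gives $\Phi=\frac{n-1}{n}BH(G)-Kf^2(G)$. Carrying out exactly the rearrangement you describe then produces
$$BH(G)\geq \frac{n\,Kf^2(G)}{n-2}-\frac{n(n-1)}{n-2}\left(\frac{1}{n\tau(G)}\right)^{\frac{2}{n-1}}
\quad\text{and}\quad
BH(G)\leq n\,Kf^2(G)-n(n-1)(n-2)\left(\frac{1}{n\tau(G)}\right)^{\frac{2}{n-1}},$$
whose $Kf^2$ coefficients differ from the ones to be proved by a factor of $n^2$. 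So the step ``substituting and isolating $BH(G)$ \ldots\ yields the lower bound'' does not go through under your stated identity; you prove true inequalities, but not the stated ones.

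The sanity check you propose would have caught this, had it been executed: with your normalization $Kf(K_n)=(n-1)/n$, the theorem's upper bound fails outright on $K_n$ (for $n=3$ it would assert $\frac{2}{3}\leq\frac{4}{27}-\frac{2}{3}<0$), whereas with $Kf(K_n)=n-1$ both bounds hold with equality, as they should since all $a_i$ coincide. The repair is one line: replace $\sum_{i=2}^{n}\lambda_i^{-1}=Kf(G)$ by $\sum_{i=2}^{n}\lambda_i^{-1}=Kf(G)/n$, so that $\Phi=\frac{n-1}{n}BH(G)-\frac{1}{n^2}Kf^2(G)$; your rearrangement then reproduces the stated bounds exactly, and the argument coincides with the paper's proof.
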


\begin{proof} In this proof we use Lemma \ref{le2,5} with $a_i=\frac{1}{\lambda_i^2}$ for $2\leq i \leq n$ . Then we have
$$(n-1)\left[\frac{1}{n-1}\sum\limits_{i=2}^{n}\frac{1}{\lambda_i^2}-\left(\prod\limits_{i=2}^{n}\frac{1}{\lambda_i^2}\right)^{\frac{1}{n-1}}\right]\leq \Phi \leq (n-1)(n-2)\left[\frac{\sum\limits_{i=2}^{n}\frac{1}{\lambda_i^2}}{n-1}-\left(\prod\limits_{i=2}^{n}\frac{1}{\lambda_i^2}\right)^{\frac{1}{n-1}}\right],$$
where $\Phi=(n-1)\sum_{i=2}^{n}\frac{1}{\lambda_i^2}-\left(\sum_{i=2}^{n}\sqrt{\frac{1}{\lambda_i^2}}\right)^2=\frac{n-1}{n}BH(G)-\frac{1}{n^2}Kf^2(G)$. Since $\prod_{i=2}^{n}\lambda_i=n\tau(G)$, we have
$$\frac{1}{n}BH(G)-(n-1)\left(\frac{1}{n\tau(G)}\right)^{\frac{2}{n-1}}\leq \Phi \leq \frac{n-2}{n}BH(G)-(n-1)(n-2)\left(\frac{1}{n\tau(G)}\right)^{\frac{2}{n-1}},$$
where $\Phi=\frac{n-1}{n}BH(G)-\frac{1}{n^2}Kf^2(G)$. Thus we have
$$\frac{Kf^2(G)}{n(n-2)}-\frac{n(n-1)}{n-2}\left(\frac{1}{n\tau(G)}\right)^{\frac{2}{n-1}}\leq BH(G)\leq \frac{Kf^2(G)}{n}-n(n-1)(n-2)\left(\frac{1}{n\tau(G)}\right)^{\frac{2}{n-1}}.$$
This completes the proof. $\Box$
\end{proof}

\begin{theorem}\label{th4,3} 
Let $G$ be a connected graph with $n$ vertices. Then
$$\left\lvert n(n-1)BH(G)-Kf^2(G)\right\rvert\leq \frac{n^2(n-1)^2}{4}\left(1-\frac{1+(-1)^{n+1}}{2n^2}\right)\left(\frac{1}{\lambda_2}
-\frac{1}{\lambda_n}\right)^2.$$
\end{theorem}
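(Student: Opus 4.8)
The plan is to derive the bound from the Grüss-type inequality of Lemma \ref{le2,6}, applied to the reciprocals of the nonzero Laplacian eigenvalues. Concretely, I would set $a_i=b_i=\frac{1}{\lambda_i}$ for $2\leq i\leq n$. Since $0<\lambda_2\leq\cdots\leq\lambda_n$, every $a_i$ lies between $a=\frac{1}{\lambda_n}$ and $A=\frac{1}{\lambda_2}$, so the factor $(A-a)(B-b)$ in Lemma \ref{le2,6} collapses to exactly $\left(\frac{1}{\lambda_2}-\frac{1}{\lambda_n}\right)^2$, which is the quantity on the right-hand side of the claim. This choice is forced: no $0$-term from the eigenvalue $\lambda_1=0$ may be admitted, or the range $(A-a)$ would widen to $\frac{1}{\lambda_2}$ and destroy the stated form.

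The second step is to translate the two averaged sums produced by Lemma \ref{le2,6} into the invariants $BH(G)$ and $Kf(G)$. Using the spectral formulas $BH(G)=n\sum_{i=2}^n\frac{1}{\lambda_i^2}$ and $Kf(G)=\sum_{i=2}^n\frac{1}{\lambda_i}$, the term $\sum a_i b_i=\sum_{i=2}^n\frac{1}{\lambda_i^2}$ is a fixed multiple of $BH(G)$, while $\left(\sum a_i\right)\left(\sum b_i\right)=Kf^2(G)$. Substituting these identities into Lemma \ref{le2,6} and clearing the normalizing denominators by multiplying through by the appropriate power of the number of terms should produce an inequality of the shape $\left|\,\alpha\,BH(G)-\beta\,Kf^2(G)\,\right|\leq(\text{const})\left(\frac{1}{\lambda_2}-\frac{1}{\lambda_n}\right)^2$, which I would then rescale to match the combination $n(n-1)BH(G)-Kf^2(G)$ asserted in the statement.

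The last step, and the only genuinely computational one, is to simplify the floor-function coefficient from Lemma \ref{le2,6} into the closed form displayed in the theorem. The key identity is
$$\frac{1}{N}\left\lfloor\frac{N}{2}\right\rfloor\left(1-\frac{1}{N}\left\lfloor\frac{N}{2}\right\rfloor\right)=\frac{1}{4}\left(1-\frac{1+(-1)^{N+1}}{2N^2}\right),$$
which one verifies by splitting into the cases $N$ even (the left side equals $\tfrac14$) and $N$ odd (it equals $\tfrac{N^2-1}{4N^2}$) and observing that $1+(-1)^{N+1}$ is $0$ or $2$ accordingly. Together with the prefactor accumulated while clearing denominators, this is what produces the constant $\frac{n^2(n-1)^2}{4}\left(1-\frac{1+(-1)^{n+1}}{2n^2}\right)$.

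I expect the main obstacle to be bookkeeping rather than insight. Lemma \ref{le2,6} is being applied to the $n-1$ reciprocals $\frac{1}{\lambda_2},\ldots,\frac{1}{\lambda_n}$, so the parameter playing the role of ``$n$'' in that lemma, and hence the precise power of it that is multiplied through, must be tracked carefully against the $n(n-1)$ and $n^2(n-1)^2$ factors appearing in the claim; it is here that the placement of $n$ versus $n-1$ in the numerator and in the parity correction is decided. The cleanest way to pin this down before committing to the final constants is to test the scaling on a highly symmetric example, for instance $K_n$, where $\frac{1}{\lambda_2}=\frac{1}{\lambda_n}$ forces the right-hand side to vanish and thereby dictates exactly which linear combination of $BH(G)$ and $Kf^2(G)$ must sit on the left.
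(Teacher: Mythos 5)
Your plan coincides with the paper's own proof: both apply Lemma \ref{le2,6} with $a_i=b_i=\frac{1}{\lambda_i}$ for $2\le i\le n$, identify $\sum_{i=2}^{n}a_ib_i=\frac{1}{n}BH(G)$ and $\left(\sum_{i=2}^{n}a_i\right)\left(\sum_{i=2}^{n}b_i\right)=Kf^2(G)$, and finish with the parity identity you state (which is correct). The gap is in your final step: the resulting inequality cannot be ``rescaled to match the combination $n(n-1)BH(G)-Kf^2(G)$'', and the $K_n$ test you yourself propose is exactly what exposes this. Applying Lemma \ref{le2,6} to the $N=n-1$ numbers $\frac{1}{\lambda_2},\ldots,\frac{1}{\lambda_n}$ gives
\[
\left\lvert \frac{BH(G)}{n(n-1)}-\frac{Kf^2(G)}{(n-1)^2}\right\rvert\leq \frac{1}{n-1}\left\lfloor\frac{n-1}{2}\right\rfloor\left(1-\frac{1}{n-1}\left\lfloor\frac{n-1}{2}\right\rfloor\right)\left(\frac{1}{\lambda_2}-\frac{1}{\lambda_n}\right)^2,
\]
and multiplying through by $n(n-1)^2$ yields
\[
\left\lvert (n-1)BH(G)-n\,Kf^2(G)\right\rvert\leq \frac{n(n-1)^2}{4}\left(1-\frac{1+(-1)^{n}}{2(n-1)^2}\right)\left(\frac{1}{\lambda_2}-\frac{1}{\lambda_n}\right)^2.
\]
The combination $(n-1)BH(G)-n\,Kf^2(G)$ is not proportional to $n(n-1)BH(G)-Kf^2(G)$, and rescaling multiplies both coefficients by the same factor, so no rescaling can produce the stated left-hand side; likewise the parity correction must be written in terms of $n-1$ (the number of terms fed to the lemma), not $n$.

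In fact the theorem as printed is false, so no honest execution of your plan can terminate at it. For $K_n$ one has $BH(K_n)=Kf(K_n)=\frac{n-1}{n}$, hence $n(n-1)BH(K_n)-Kf^2(K_n)=(n-1)^2\left(1-\frac{1}{n^2}\right)>0$ while the stated right-hand side vanishes because $\lambda_2=\lambda_n$; by contrast $(n-1)BH(K_n)-n\,Kf^2(K_n)=0$, as a correct Gr\"{u}ss bound requires. As a second check, for $P_3$ (Laplacian eigenvalues $0,1,3$) the corrected inequality above holds with equality ($\frac{4}{3}=\frac{4}{3}$), whereas the printed one would read $\frac{164}{9}\leq\frac{32}{9}$. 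The paper's own proof commits precisely the bookkeeping slips you flagged as the danger: it keeps $\left\lfloor\frac{n}{2}\right\rfloor$ where $\left\lfloor\frac{n-1}{2}\right\rfloor$ is required, it multiplies the $BH$ term and the $Kf^2$ term by different factors when clearing denominators, and it omits the square on $\sum_{i=2}^{n}\frac{1}{\lambda_i}$. So your approach and your safeguards are sound; carried out to the end they prove the corrected inequality displayed above, not the stated theorem.
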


\begin{proof} In this proof we use Lemma \ref{le2,6} with $a_i=b_i=\frac{1}{\lambda_i}$ for $2\leq i \leq n$ . Then we have
$$\left\lvert \frac{1}{n-1}\sum\limits_{i=2}^{n}\frac{1}{\lambda_i^2}-\frac{1}{(n-1)^2}\sum\limits_{i=2}^{n}\frac{1}{\lambda_i}\right\rvert\leq \frac{1}{n}\left\lfloor\frac{n}{2}\right\rfloor\left(1-\frac{1}{n}\left\lfloor\frac{n}{2}\right\rfloor\right)\left(\frac{1}{\lambda_2}
-\frac{1}{\lambda_n}\right)^2,$$
that is,
$$\left\lvert n(n-1)BH(G)-Kf^2(G)\right\rvert\leq n(n-1)^2\left\lfloor\frac{n}{2}\right\rfloor\left(1-\frac{1}{n}\left\lfloor\frac{n}{2}\right\rfloor\right)\left(\frac{1}{\lambda_2}
-\frac{1}{\lambda_n}\right)^2.$$
Note that $\left\lfloor\frac{n}{2}\right\rfloor\left(1-\frac{1}{n}\left\lfloor\frac{n}{2}\right\rfloor\right)
=\frac{n}{4}\left(1-\frac{1+(-1)^{n+1}}{2n^2}\right)$. We have
$$\left\lvert n(n-1)BH(G)-Kf^2(G)\right\rvert\leq \frac{n^2(n-1)^2}{4}\left(1-\frac{1+(-1)^{n+1}}{2n^2}\right)\left(\frac{1}{\lambda_2}
-\frac{1}{\lambda_n}\right)^2.$$
This completes the proof. $\Box$
\end{proof}

\section{\large  The biharmonic index of trees and firefly graphs  }

\begin{theorem}\label{th5,1} 
Let $S(a, b)$ be a double star tree on $n$ vertices and $a+b=n-2$. Then
$$n^2+3n+\frac{4}{n}-16\leq BH(S(a,b))\leq n^2-2n+4\left\lceil\frac{n-2}{2}\right\rceil\left\lfloor\frac{n-2}{2}\right\rfloor+\frac{(\lceil\frac{n-2}{2}\rceil\lfloor\frac{n-2}{2}\rfloor+1)^2}{n},$$
the left (right) equality holds if and only if $S(1,n-3)$ $(S(\lceil\frac{n-2}{2}\rceil, \lfloor\frac{n-2}{2}\rfloor)$.
\end{theorem}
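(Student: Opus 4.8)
The plan is to obtain a closed-form expression for $BH(S(a,b))$ in terms of $a$, $b$ and $n$, and then optimise it over all admissible splits $a+b=n-2$ with integers $a,b\ge 1$. First I would determine the Laplacian spectrum of $S(a,b)$. The central edge $uv$ joining the two hubs is a cut edge, and deleting it leaves two stars $G_1=K_{1,a}$ (centred at $u$) and $G_2=K_{1,b}$ (centred at $v$). The relevant characteristic polynomials are immediate: $\phi(L(K_{1,a}))=x(x-1)^{a-1}(x-a-1)$, and since deleting the hub leaves $a$ mutually non-adjacent degree-one vertices, $L_u(K_{1,a})=I_a$, so $\phi(L_u(K_{1,a}))=(x-1)^{a}$ (and likewise with $b$). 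Substituting these into the cut-edge formula of Lemma \ref{le2,10} and factoring out the common powers of $x$ and $(x-1)$, and using $a+b=n-2$, I expect
$$\phi(L(S(a,b)))=x(x-1)^{n-4}\,Q(x),\qquad Q(x)=x^3-(n+2)x^2+\bigl(1+(a+2)(b+2)\bigr)x-n.$$
Hence the Laplacian eigenvalues are $0$, the value $1$ with multiplicity $n-4$, and the three roots $\mu_1,\mu_2,\mu_3$ of the cubic $Q$.

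The second step is to convert this spectral data into $BH$. Since $BH(S(a,b))=n\bigl[(n-4)+\sum_{i=1}^{3}\mu_i^{-2}\bigr]$, I only need $\sum\mu_i^{-2}$, which I would read off from Vieta's relations for $Q$, namely $\sum\mu_i=n+2$, $\sum_{i<j}\mu_i\mu_j=1+(a+2)(b+2)$ and $\mu_1\mu_2\mu_3=n$. These give $\sum\mu_i^{-1}=\bigl(1+(a+2)(b+2)\bigr)/n$ and $\sum_{i<j}(\mu_i\mu_j)^{-1}=(n+2)/n$, whence $\sum\mu_i^{-2}=\bigl(\sum\mu_i^{-1}\bigr)^2-2\sum_{i<j}(\mu_i\mu_j)^{-1}$. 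Collecting terms, I expect the clean closed form
$$BH(S(a,b))=n^2-6n-4+\frac{\bigl(1+(a+2)(b+2)\bigr)^2}{n}.$$
Writing $(a+2)(b+2)=ab+2n$, this is a strictly increasing function of the single quantity $ab$, because $a+b=n-2$ is fixed and $1+(a+2)(b+2)>0$.

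The final step is an elementary optimisation of $ab$ subject to $a+b=n-2$ with $a,b\ge1$: the product $ab$ is minimised uniquely at $\{a,b\}=\{1,\,n-3\}$ and maximised uniquely at $\{a,b\}=\{\lceil\frac{n-2}{2}\rceil,\lfloor\frac{n-2}{2}\rfloor\}$. Substituting $ab=n-3$ (so $1+(a+2)(b+2)=3n-2$) into the closed form and expanding $(3n-2)^2/n$ gives the lower bound $n^2+3n+\frac{4}{n}-16$, attained only by $S(1,n-3)$; substituting $ab=\lceil\frac{n-2}{2}\rceil\lfloor\frac{n-2}{2}\rfloor$ and expanding $(ab+2n+1)^2/n=\frac{(ab+1)^2}{n}+4(ab+1)+4n$ yields the stated upper bound $n^2-2n+4\lceil\frac{n-2}{2}\rceil\lfloor\frac{n-2}{2}\rfloor+\frac{(\lceil\frac{n-2}{2}\rceil\lfloor\frac{n-2}{2}\rfloor+1)^2}{n}$, attained only by the balanced double star.

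The only substantive work lies in the first two steps. I expect the main obstacle to be a careful (rather than conceptual) derivation of the cubic $Q(x)$ from Lemma \ref{le2,10} and the subsequent power-sum bookkeeping via Vieta; once $BH$ is expressed in closed form as a monotone function of $ab$, the extremal analysis and the identification of the unique extremisers are routine. As sanity checks I would verify the formula on $S(1,1)=P_4$ (where $n-4=0$ and $Q$ must reproduce the spectrum $\{2-\sqrt2,\,2,\,2+\sqrt2\}$) and confirm that the two bounds coincide for $n=5$, where $S(1,2)$ is the only double star.
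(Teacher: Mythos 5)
Your proposal is correct and follows essentially the same route as the paper: factor $\phi(L(S(a,b)))$ as $x(x-1)^{n-4}$ times a cubic, use Vieta's relations to compute $\sum\mu_i^{-2}$, obtain $BH$ as a monotone function of $ab$ (your form $n^2-6n-4+\frac{(ab+2n+1)^2}{n}$ is algebraically identical to the paper's $n^2-2n+4ab+\frac{(ab+1)^2}{n}$), and optimise $ab$ over $a+b=n-2$. The only cosmetic difference is that you derive the characteristic polynomial explicitly from the cut-edge formula of Lemma \ref{le2,10}, whereas the paper simply asserts it ``by direct calculation.''
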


\begin{proof} By direct calculation, we have
$$\phi(L(S(a,b)))=x(x-1)^{n-4}[x^3-(n+2)x^2+(2n+ab+1)x-n].$$
Let $x_1$, $x_2$ and $x_3$ be the roots of the following polynomial
$$f(x):= x^3-(n+2)x^2+(2n+ab+1)x-n.$$
By the Vieta Theorem, we have
$$
\begin{cases}
x_1+x_2+x_3=n+2, \\
\frac{1}{x_1}+\frac{1}{x_2}+\frac{1}{x_3}=\frac{2n+ab+1}{n},\\
x_1x_2x_3=n.
\end{cases} $$
Thus
\begin{eqnarray*}
\frac{1}{x_1^2}+\frac{1}{x_2^2}+\frac{1}{x_3^2} & = & \left(\frac{1}{x_1}+\frac{1}{x_2}+\frac{1}{x_3}\right)^2
-2\left(\frac{1}{x_1x_2}+\frac{1}{x_2x_3}+\frac{1}{x_1x_3}\right)\\
& = & \left(\frac{1}{x_1}+\frac{1}{x_2}+\frac{1}{x_3}\right)^2
-\frac{2}{n}(x_1+x_2+x_3)\\
& = & \left(\frac{2n+ab+1}{n}\right)^2
-\frac{2}{n}(n+2)\\
& = & \left(\frac{2n+ab+1}{n}\right)^2-\frac{4}{n}-2.
\end{eqnarray*}
Further, we have
$$BH(S(a,b))=n\sum\limits_{i=2}^{n}\frac{1}{\lambda_i^2}=n(n-4)+2n+4ab+\frac{(ab+1)^2}{n}=n^2-2n+4ab+\frac{(ab+1)^2}{n}.$$
Since $n-3\leq ab \leq \lceil\frac{n-2}{2}\rceil\lfloor\frac{n-2}{2}\rfloor$, we have
$$n^2+3n+\frac{4}{n}-16\leq BH(S(a,b))\leq n^2-2n+4\left\lceil\frac{n-2}{2}\right\rceil\left\lfloor\frac{n-2}{2}\right\rfloor+\frac{(\lceil\frac{n-2}{2}\rceil\lfloor\frac{n-2}{2}\rfloor+1)^2}{n},$$
the left (right) equality holds if and only if $S(1,n-3)$ $(S(\lceil\frac{n-2}{2}\rceil, \lfloor\frac{n-2}{2}\rfloor)$. This completes the proof. $\Box$
\end{proof}

\begin{theorem}\label{th5,2} 
Let $T_n$ be a tree on $n\geq 8$ vertices. If the diameter $d(T_n)\geq \pi \sqrt[4]{\frac{7n}{8}}-1$, then
$$BH(T_n)> BH(K_{1,\, n-1}).$$
\end{theorem}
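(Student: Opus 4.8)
The plan is to bound $BH(T_n)$ from below by splitting $n\sum_{i=2}^n \lambda_i^{-2}$ into the single dominant term coming from the algebraic connectivity $\lambda_2$ and the contribution of the many small eigenvalues guaranteed by Lemma~\ref{le2,8}, and then to compare with the exact value of $BH(K_{1,n-1})$. Since the star has Laplacian spectrum $0,\,1^{(n-2)},\,n$, a direct computation gives
\[
BH(K_{1,n-1}) = n\left[(n-2)+\frac{1}{n^2}\right] = n^2 - 2n + \frac{1}{n}.
\]
So it suffices to produce a lower bound for $BH(T_n)$ that exceeds $n^2 - 2n + \tfrac1n$ whenever $n\ge 8$ and $d(T_n)\ge \pi\sqrt[4]{7n/8}-1$.

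For the dominant term I would feed the diameter hypothesis into Lemma~\ref{le2,7}. Using the elementary inequality $1-\cos x \le x^2/2$ with $x = \pi/(d+1)$ turns Lemma~\ref{le2,7} into $\lambda_2 \le \pi^2/(d+1)^2$, hence $\lambda_2^{-2}\ge (d+1)^4/\pi^4$. The hypothesis $d+1 \ge \pi\sqrt[4]{7n/8}$ then yields $(d+1)^4 \ge \pi^4\cdot 7n/8$ and therefore
\[
\frac{1}{\lambda_2^2} \ge \frac{7n}{8}.
\]

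The key point is that this dominant term alone only gives $n\cdot \tfrac{7n}{8} = \tfrac{7n^2}{8}$, which is strictly smaller than $n^2-2n$; so the contribution of the remaining eigenvalues is indispensable. To capture it I would invoke Lemma~\ref{le2,8}: at least $\lceil n/2\rceil$ Laplacian eigenvalues of $T_n$ lie below $2-\tfrac2n$, one of which is $\lambda_1=0$, so at least $\lceil n/2\rceil-1$ of $\lambda_2,\dots,\lambda_n$ lie below $2-\tfrac2n$; discarding the single index $i=2$ leaves at least $\lceil n/2\rceil-2$ of the $\lambda_i$ with $i\ge 3$ satisfying $0<\lambda_i<2-\tfrac2n$. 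For each such eigenvalue $\lambda_i^{-2} > (2-\tfrac2n)^{-2} = n^2/\bigl(4(n-1)^2\bigr)$, whence
\[
\sum_{i=3}^n \frac{1}{\lambda_i^2} \ge \left(\left\lceil\frac{n}{2}\right\rceil-2\right)\frac{n^2}{4(n-1)^2}.
\]

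Combining the two estimates and using $\lceil n/2\rceil \ge n/2$ gives
\[
BH(T_n) \ge n\left[\frac{7n}{8} + \frac{n-4}{2}\cdot\frac{n^2}{4(n-1)^2}\right] = \frac{7n^2}{8} + \frac{n^3(n-4)}{8(n-1)^2}.
\]
The proof then reduces to verifying the single-variable inequality $\tfrac{7n^2}{8}+\tfrac{n^3(n-4)}{8(n-1)^2} > n^2-2n+\tfrac1n$, which after clearing denominators becomes $14n^3-33n^2+8n+16-\tfrac{8}{n}>0$, clearly valid for $n\ge 8$. The main obstacle is exactly this balancing act: because $\lambda_2^{-2}$ contributes only $\tfrac78 n^2$, one must extract an additional contribution of order $n^2$ from the bulk of small eigenvalues via Lemma~\ref{le2,8}, and the counting (how many of $\lambda_3,\dots,\lambda_n$ fall below the average degree, after removing $\lambda_1$ and $\lambda_2$) has to be tight enough to push the total past $n^2-2n+\tfrac1n$.
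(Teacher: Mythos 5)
Your proposal is correct and takes essentially the same route as the paper's proof: both use Lemma~\ref{le2,7} together with $1-\cos x<x^{2}/2$ to get $1/\lambda_2^{2}\geq (d+1)^{4}/\pi^{4}\geq 7n/8$, and Lemma~\ref{le2,8} to extract at least $\lceil n/2\rceil-2$ further eigenvalues below $2-\tfrac{2}{n}$, each contributing $n^{2}/(4(n-1)^{2})$, before comparing with $BH(K_{1,n-1})=n^{2}-2n+\tfrac{1}{n}$. The only (harmless) differences are cosmetic: you discard the remaining eigenvalues outright and verify the final polynomial inequality directly, whereas the paper retains a small extra term and passes through the intermediate bound $n(n-1)$.
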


\begin{proof} Since $1-\cos x<\frac{x^2}{2}$, by Lemma \ref{le2,7}, we have
$$\lambda_2(T_n)\leq 2\left(1-\cos\left(\frac{\pi}{d(T_n)+1}\right)\right)<\left(\frac{\pi}{d(T_n)+1}\right)^2$$
By Lemma \ref{le2,8}, we have
\begin{eqnarray*}
BH(T_n) & = & n\left(\frac{1}{\lambda_2^2}+\cdots+\frac{1}{\lambda_n^2}\right)\\
& > & n\left(\frac{(d(T_n)+1)^4}{\pi^4}+\left(\left\lceil \frac{n}{2}\right\rceil-2\right)\frac{1}{\left(2-\frac{2}{n}\right)^2}+\left\lfloor\frac{2}{n}\right\rfloor\frac{1}{n^2}\right)\\
& > & n\left(\frac{(d(T_n)+1)^4}{\pi^4}+\left(\frac{n}{2}-2\right)\frac{1}{\left(2-\frac{2}{n}\right)^2}+\left(\frac{2}{n}-1\right)\frac{1}{n^2}\right)\\
& = & n\left(\frac{(d(T_n)+1)^4}{\pi^4}+\frac{n^2(n-4)}{8(n-1)^2}+\left(\frac{2}{n}-1\right)\frac{1}{n^2}\right)\\
& \geq & n\left(\frac{7n}{8}+\frac{n^2(n-4)}{8(n-1)^2}+\left(\frac{2}{n}-1\right)\frac{1}{n^2}\right)\\
& > & n(n-1)\\
& > & n\left(n-2+\frac{1}{n^2}\right)\\
& = & BH(K_{1,\,n-1})
\end{eqnarray*}
for $n\geq 8$. This completes the proof. $\Box$
\end{proof}

The following conjecture is concretization of Problem 6.3 in \cite{WLY}.

\begin{conjecture}
Let $T_n$ be a tree on $n\geq 5$ vertices. Then
$$BH(K_{1,\,n-1})\leq BH(T_n)\leq BH(P_n),$$
the left (right) equality holds if and only if  $T_n=K_{1,\,n-1}$ ($T_n=P_n$).
\end{conjecture}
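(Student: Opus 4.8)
The plan is to first convert the spectral quantity $BH(T)$ into purely combinatorial invariants, and then compare trees by structure-changing transformations. Write $\phi(L(T))=\sum_{k=0}^{n}(-1)^k c_k x^{n-k}$, so that $c_k$ is the $k$-th elementary symmetric function of the nonzero eigenvalues $\lambda_2,\dots,\lambda_n$. Arguing as in the Vieta step of Theorem~\ref{th5,1}, for a tree one has $c_{n-1}=\prod_{i=2}^{n}\lambda_i=n\tau(T)=n$, while $\sum_{i=2}^{n}\frac{1}{\lambda_i}=Kf(T)/n=W(T)/n$ because resistance distance coincides with graph distance on a tree, whence $c_{n-2}=c_{n-1}\sum_{i=2}^{n}\frac{1}{\lambda_i}=W(T)$. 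Expanding the square then gives
$$\sum_{i=2}^{n}\frac{1}{\lambda_i^2}=\Big(\sum_{i=2}^{n}\frac{1}{\lambda_i}\Big)^2-2\sum_{2\le i<j\le n}\frac{1}{\lambda_i\lambda_j}=\frac{c_{n-2}^2}{c_{n-1}^2}-\frac{2c_{n-3}}{c_{n-1}},$$
and hence the clean identity $BH(T)=\dfrac{W(T)^2}{n}-2\,c_{n-3}(T)$. This reduces the conjecture to controlling two well-studied tree invariants: the Wiener index $W=c_{n-2}$ and the Laplacian coefficient $c_{n-3}$.

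For the two bounds I would use opposite structural moves. For the upper bound, while $T\ne P_n$ there is a vertex of degree at least $3$; transferring a branch onto the end of a longest path yields a tree strictly closer to $P_n$, and I would track the effect of such a move on $BH$ via the identity above, using that $P_n$ is the unique Wiener-maximal tree. For the lower bound I would use the reverse, star-forming move that concentrates branches at one center and drives $T$ toward the unique Wiener-minimal tree $K_{1,n-1}$; to avoid analysing every tree I would first dispatch all trees of large diameter by Theorem~\ref{th5,2}, leaving only trees of diameter $O(\sqrt[4]{n})$, which the collapsing move sends to the diameter-$2$ tree, namely the star.

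The real obstacle is that $W(T)$ and $c_{n-3}(T)$ move in the \emph{same} direction under each of these transformations — both are minimized by the star and maximized by the path — so termwise monotonicity of the two summands of $BH=W^2/n-2c_{n-3}$ settles neither inequality. One must instead show that at every step the change in the quadratic term strictly dominates twice the change in $c_{n-3}$. I expect this to rest on a local estimate of the form $\Delta\!\big(W^2/n\big)=\frac{W+W'}{n}\,\Delta W$, combined with an upper bound on $\Delta c_{n-3}$ expressed through the same branch-transfer data: since $W$ reaches order $n^3$ while $c_{n-3}$ stays of order $n^2$, the increments of $W^2/n$ are amplified by a factor growing with $n$, so the Wiener term should win, with the thinnest margin occurring near the star. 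Making this domination uniform over all admissible moves, and extracting the equality/uniqueness cases from those of the underlying Wiener-index moves, is where the bulk of the work lies; the main technical device I would develop is a forest-counting expression for $c_{n-3}$ together with the known edge-shifting monotonicity of Laplacian coefficients of trees.
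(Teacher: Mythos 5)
First, a point of context: the statement you set out to prove is labeled a \emph{conjecture} in the paper --- the author records it as a concretization of Problem 6.3 of \cite{WLY} and offers no proof --- so the only question is whether your argument closes it, and it does not. Your opening reduction is correct and genuinely useful: for a tree, $c_{n-1}=n\tau(T)=n$ and $c_{n-2}=Kf(T)=W(T)$, and expanding $\bigl(\sum_{i\ge2}1/\lambda_i\bigr)^2$ indeed yields the identity $BH(T)=W(T)^2/n-2c_{n-3}(T)$. One can verify it on the star, whose nonzero Laplacian spectrum is $\{1^{(n-2)},n\}$: there $c_{n-3}=(n-2)+n\binom{n-2}{2}$, and $(n-1)^4/n-2c_{n-3}=n^2-2n+\tfrac1n=BH(K_{1,\,n-1})$, matching the paper's value.

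Everything after that identity, however, is a plan rather than a proof, and the heuristic you offer to support the plan's central estimate is quantitatively wrong. You claim that ``$W$ reaches order $n^3$ while $c_{n-3}$ stays of order $n^2$,'' so that increments of $W^2/n$ are amplified relative to increments of $2c_{n-3}$. In fact, $c_{n-3}$ is the sum, over all spanning forests of $T$ with three components, of the products of the component orders (the very forest-counting formula you propose to develop), and for the path this is $\Theta(n^5)$ --- the same order as $W(P_n)^2/n\sim n^5/36$. Concretely, since $\lambda_k(P_n)\approx \pi^2(k-1)^2/n^2$ for small $k$, one gets $BH(P_n)=n\sum_{i\ge2}\lambda_i^{-2}\sim n^5/90$, so your own identity forces $2c_{n-3}(P_n)=W(P_n)^2/n-BH(P_n)\sim n^5/36-n^5/90=n^5/60$. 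Thus $BH$ arises from heavy cancellation between two terms of identical order, no ``amplification factor growing with $n$'' exists, and whether $\Delta\bigl(W^2/n\bigr)$ beats $2\,\Delta c_{n-3}$ under a branch move is precisely the conjecture in disguise, not something order-of-magnitude considerations settle. The lower-bound half has the same status: Theorem~\ref{th5,2} only eliminates trees of diameter at least $\pi\sqrt[4]{7n/8}-1$, leaving every tree of diameter $O(n^{1/4})$ --- almost all trees --- to be handled by the same unproven domination estimate; and the monotonicity result you correctly cite (every Laplacian coefficient $c_k$ is star-minimal and path-maximal among trees) is exactly what makes the two terms of $W^2/n-2c_{n-3}$ move together, so it cannot by itself decide either inequality, nor the uniqueness claims. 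In short: the reformulation $BH=W^2/n-2c_{n-3}$ is a nice contribution, but the conjecture remains open after your argument.
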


\begin{theorem}\label{th5,3} 
Let $G$ be a connected graph with $n$ vertices and diameter $d(G)=2$. Then
$$BH(G)\leq BH(K_{1,\,n-1})$$
with equality if and only if $G=K_{1,\,n-1}$.
\end{theorem}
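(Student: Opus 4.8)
The plan is to majorize each summand $1/\lambda_i^2$ by a linear function of $\lambda_i$ and then cash in the trivial edge bound $m\ge n-1$. First I would pin down the range of the nonzero Laplacian eigenvalues. For a connected graph of diameter $2$, Lemma \ref{le2,9} gives $\lambda_2\ge 1$, so $\lambda_i\ge\lambda_2\ge 1$ for every $i\ge 2$; on the other side, the standard fact that the largest Laplacian eigenvalue of an $n$-vertex graph never exceeds $n$ gives $\lambda_i\le\lambda_n\le n$. Hence $\lambda_i\in[1,n]$ for all $i\in\{2,\dots,n\}$. I would also record the benchmark value $BH(K_{1,n-1})=n\bigl(n-2+\tfrac{1}{n^2}\bigr)=n(n-2)+\tfrac1n$, read off from the star's spectrum $\{0,1^{(n-2)},n\}$.

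The key step is a chord inequality. Since $f(\lambda)=1/\lambda^2$ is strictly convex on $(0,\infty)$, on $[1,n]$ its graph lies below the chord joining $(1,1)$ and $(n,1/n^2)$, which yields
$$\frac{1}{\lambda^2}\le 1-\frac{n+1}{n^2}(\lambda-1),\qquad \lambda\in[1,n],$$
with equality exactly at $\lambda=1$ and $\lambda=n$. Applying this to $\lambda_2,\dots,\lambda_n$, summing, and inserting $\sum_{i=2}^n\lambda_i=2m$ gives
$$\sum_{i=2}^{n}\frac{1}{\lambda_i^2}\le (n-1)-\frac{n+1}{n^2}\bigl(2m-(n-1)\bigr).$$
Multiplying by $n$ and subtracting the benchmark, a short rearrangement collapses everything to
$$BH(G)-BH(K_{1,n-1})\le \frac{2(n+1)}{n}\,(n-1-m),$$
and the right-hand side is nonpositive for every connected graph because $m\ge n-1$. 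This establishes $BH(G)\le BH(K_{1,n-1})$.

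For the equality discussion I would trace back through the two inequalities used. Equality in the chord bound, by strict convexity, forces each $\lambda_i$ $(i\ge 2)$ to equal $1$ or $n$; equality in $m\ge n-1$ forces $m=n-1$, so $G$ is a tree. The only tree of diameter $2$ is the star, whose spectrum does realize both equalities, so equality holds if and only if $G=K_{1,n-1}$. The genuinely creative step here is selecting the right linear majorant — the chord through the two extreme admissible eigenvalue positions $1$ and $n$ — and noticing that the coefficient multiplying $m$ acquires precisely the sign delivered by the universal connectivity bound $m\ge n-1$; once that alignment is spotted, the remaining computations and the characterization of the extremal tree are routine.
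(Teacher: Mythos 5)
Your proof is correct, and it takes a genuinely different route from the paper's. The paper also starts from Lemma \ref{le2,9} ($\lambda_2\ge 1$), but then invokes the degree--eigenvalue bounds $\lambda_n\ge \Delta+1$ and $\lambda_{n-1}\ge \Delta_2$ from \cite{LP, GM} and argues by dichotomy: if $\Delta_2\ge 2$, then $BH(G)\le n\left(n-3+\frac{1}{\Delta_2^2}+\frac{1}{(\Delta+1)^2}\right)<n\left(n-2+\frac{1}{n^2}\right)=BH(K_{1,\,n-1})$ by a crude numerical estimate, and otherwise $\Delta_2=1$ forces $G=K_{1,\,n-1}$; the equality characterization falls out of this case split. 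You instead use only the eigenvalue window $[1,n]$ (Lemma \ref{le2,9} together with the standard bound $\lambda_n\le n$), majorize the strictly convex function $1/\lambda^2$ by its chord over that window, and convert the resulting sum via the trace identity $\sum_{i=2}^{n}\lambda_i=2m$, finishing with $m\ge n-1$. Your computations check out: the chord through $(1,1)$ and $(n,1/n^2)$ is indeed $1-\frac{n+1}{n^2}(\lambda-1)$, the summed bound rearranges exactly to $BH(G)-BH(K_{1,\,n-1})\le \frac{2(n+1)}{n}(n-1-m)$, and the equality analysis is clean --- in fact you need only the tightness of $m\ge n-1$ (which makes $G$ a tree, and the unique diameter-$2$ tree is the star), so the chord-equality observation is dispensable. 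What your approach buys: it avoids the two auxiliary degree bounds and the case analysis, and it yields a quantitative refinement, namely that the deficit $BH(K_{1,\,n-1})-BH(G)$ is at least $\frac{2(n+1)}{n}(m-n+1)$, growing linearly in the number of edges beyond a spanning tree. What the paper's approach buys: it localizes the loss to the two largest eigenvalues via vertex degrees, giving strictness as soon as the second-largest degree is at least $2$, without any reference to the total edge count.
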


\begin{proof} It is well known that $\lambda_n\geq \Delta+1$ and $\lambda_{n-1}\geq \Delta_2$ (see \cite{LP, GM}), where $\Delta$ and $\Delta_2$ are the maximum degree and the second largest degree of $G$, respectively. If $2\leq\Delta_2\leq \Delta$, by Lemma \ref{le2,9}, we have
\begin{eqnarray*}
BH(G) & = & n\left(\frac{1}{\lambda_2^2}+\cdots+\frac{1}{\lambda_n^2}\right)\\
& \leq & n \left(n-3+\frac{1}{\Delta_2^2}+\frac{1}{(\Delta+1)^2}\right)\\
& < &  n \left(n-3+\frac{1}{2^2}+\frac{1}{(2+1)^2}\right)\\
& < & n \left(n-2+\frac{1}{n^2}\right)\\
& = & BH(K_{1,\,n-1}).
\end{eqnarray*}
Thus $\Delta_2=1$, that is, $G=K_{1,\,n-1}$, then $BH(G)=BH(K_{1,\,n-1})$.

Combining the above arguments, we have $BH(G)\leq BH(K_{1,\,n-1})$
with equality if and only if $G=K_{1,\,n-1}$. This completes the proof. $\Box$
\end{proof}

\begin{theorem}\label{th5,4} 
Let $F_{s,\,t,\,n-2s-2t-1}$ ($s\geq0, \, t\geq 0, \, n-2s-2t-1\geq 0$) be a firefly graph with $n\geq 7$ vertices.

(1) If $s=t=0$, then $BH(F_{0,\, 0,\, n-1})=n^2-2n+\frac{1}{n}$.

(2) If $s=0$ and $t=1$, then $BH(F_{0,\,1,\,n-3})=n^2+3n-16+\frac{4}{n}$.

(3) If $s=0$, $t\geq 2$ and $n$ is odd, then
$$n^2+8n+\frac{25}{n}-32\leq BH(F_{0,\,t,\,n-2t-1})\leq \frac{7n^2 }{2}-\frac{41n}{4}+\frac{25}{4n}+\frac{1}{2},$$
the left (right) equality holds if and only if  $F_{0,\,t,\,n-2t-1}=F_{0,\,2,\,n-5}$ ($F_{0,\,t,\,n-2t-1}=F_{0,\, \frac{n-1}{2},\,0}$). \\
If $s=0$, $t\geq 2$ and $n$ is even, then
$$n^2+8n+\frac{25}{n}-32\leq BH(F_{0,\,t,\,n-2t-1})\leq \frac{7n^2}{2}-\frac{51n}{4}+\frac{16}{n}+4,$$
the left (right) equality holds if and only if  $F_{0,\,t,\,n-2t-1}=F_{0,\,2,\,n-5}$ ($F_{0,\,t,\,n-2t-1}=F_{0,\, \frac{n-2}{2},\,1}$).

(4) If $s\geq 1$, $t=0$ and $n$ is odd, then
$$\frac{5n^2 }{9}-\frac{14n}{9}+\frac{1}{n}\leq BH(F_{0,\,t,\,n-2t-1})\leq n^2-\frac{26}{9}n+\frac{1}{n},$$
the left (right) equality holds if and only if $F_{s,\,0,\,n-2s-1}=F_{1,\,0,\,n-3}$ ($F_{s,\,0,\,n-2s-1}=F_{\frac{n-1}{2},\,0,\,0}$).\\
If $s\geq 1$, $t=0$ and $n$ is even, then
$$\frac{5n^2}{9}-\frac{10n}{9}+\frac{1}{n}\leq BH(F_{0,\,t,\,n-2t-1})\leq n^2-\frac{26}{9}n+\frac{1}{n},$$
the left (right) equality holds if and only if $F_{s,\,0,\,n-2s-1}=F_{1,\,0,\,n-3}$ ($F_{s,\,0,\,n-2s-1}=F_{\frac{n-2}{2},\,0,\,1}$).

(5) If $s\geq 1$, $t\geq 1$ and $n$ is odd, then
$$\frac{5n^2}{9}+\frac{13n}{3}+\frac{4}{n}-16\leq BH(F_{s,\,t,\,n-2s-2t-1})\leq \frac{7n^2}{2}-\frac{581n}{36}+\frac{121}{4n}+\frac{15}{2},$$
the left (right) equality holds if and only if $F_{s,\,t,\,n-2s-2t-1}=F_{\frac{n-3}{2},\,1,\,0}$ ($F_{s,\,t,\,n-2s-2t-1}=F_{1,\,\frac{n-3}{2},\,0}$).\\
If $s\geq 1$, $t\geq 1$ and $n$ is even, then
$$\frac{5n^2}{9}+\frac{43n}{9}+\frac{4}{n}-16\leq BH(F_{s,\,t,\,n-2s-2t-1})\leq \frac{7n^2}{2}-\frac{671n}{36}+\frac{49}{n}+11,$$
the left (right) equality holds if and only if $F_{s,\,t,\,n-2s-2t-1}=F_{\frac{n-4}{2},\,1,\,1}$ ($F_{s,\,t,\,n-2s-2t-1}=F_{1,\,\frac{n-4}{2},\,1}$).
\end{theorem}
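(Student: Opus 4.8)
The plan is to compute the full Laplacian spectrum of $F_{s,t,n-2s-2t-1}$ by exploiting its automorphism group and then to read off $\sum_{i\ge2}1/\lambda_i^2$ by Vieta's formulas, exactly as in the proof of Theorem~\ref{th5,1}. The graph carries an equitable partition into (at most) five cells: the centre; the $2s$ triangle vertices; the $t$ inner path-vertices; the $t$ path-leaves; and the $n-2s-2t-1$ pendant-edge leaves. Eigenvectors that live inside a single cell type produce the bulk of the spectrum, while the few remaining eigenvalues are those of the associated divisor matrix. Since every pendant edge and every pendant path is joined to the centre through a cut edge, one may alternatively assemble $\phi(L(F_{s,t,n-2s-2t-1}))$ by repeated use of Lemma~\ref{le2,10}; both routes give the same factorisation.

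Carrying this out, the cell-internal modes contribute the eigenvalue $1$ (from differences of pendant-edge leaves, and from the symmetric-within/antisymmetric-across triangle modes), the eigenvalue $3$ with multiplicity $s$ (each triangle gives $L(e_a-e_b)=3(e_a-e_b)$), and the two roots $\tfrac{3\pm\sqrt5}{2}$ of $x^2-3x+1$, each with multiplicity $t-1$, for which $1/\mu_+^2+1/\mu_-^2=7$. For $t\ge1$ the essential eigenvalues are the roots of the cubic
\[
x^3+(t-n-3)x^2+(3n-3t+1)x-n,
\]
so Vieta gives $\sum1/\mu_i=(3n-3t+1)/n$ and $\sum1/(\mu_i\mu_j)=(n+3-t)/n$, whence $\sum1/\mu_i^2=(3n-3t+1)^2/n^2-2(n+3-t)/n$; for $t=0$ the divisor polynomial is simply $x(x-1)(x-n)$. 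Collecting every contribution, all regimes collapse to the single closed form
\[
BH(F_{s,t,n-2s-2t-1})=n^2-\frac{8ns}{9}+5nt-2n-16t+\frac{(3t-1)^2}{n}.
\]
A useful check is that the product of the nonzero eigenvalues is $3^s\,n$, matching $n\tau$ with $\tau=3^s$ spanning trees, and that specialising $(s,t)$ to $(0,0)$ and $(0,1)$ recovers parts (1) and (2).

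With this formula the extremal problems become elementary. Since $\partial BH/\partial s=-\tfrac{8n}{9}<0$ and $\partial BH/\partial t=5n-16+\tfrac{6(3t-1)}{n}>0$ for $n\ge7$, the index is strictly decreasing in the number of triangles and strictly increasing in the number of pendant paths. Hence over the feasible lattice region $\{s,t\ge0,\ 2s+2t+1\le n\}$ restricted by each case hypothesis, the minimiser makes $t$ as small and $s$ as large as allowed, and the maximiser does the reverse; the parity of $n$ then fixes whether the leftover pendant-edge count is $0$ or $1$, which identifies the extremal firefly graph. Substituting the corner values of $(s,t)$ into the closed form produces the stated bounds in (3)--(5).

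The main obstacle is the spectral bookkeeping in the degenerate subcases, where the five-cell picture collapses: when $s=0$ there is no triangle cell, when $t\le1$ the factor $(x^2-3x+1)^{t-1}$ is absent, and when the pendant-edge count is $0$ the divisor matrix gains or loses a factor $(x-1)$, shifting the multiplicity of the eigenvalue $1$. One must verify that the closed form above genuinely extends across all of these boundaries — most cleanly by reading $\sum1/\lambda_i^2$ off the two lowest nonzero coefficients of $\phi(L)/x$, which bypasses the per-eigenvalue multiplicities — so that a single expression serves every case. The remaining delicate point is the two-variable optimisation in (5): because the feasible set is a lattice simplex and $BH$ is monotone in each variable separately, the extrema occur at the vertices with $t=1$, $s$ maximal and $s=1$, $t$ maximal, and one checks the parity-dependent coordinates of these vertices to match the extremal graphs named there.
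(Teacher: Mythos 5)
Your proposal is correct, and its skeleton is the same as the paper's: obtain the factorization of $\phi(L(F_{s,\,t,\,n-2s-2t-1}))$ (the paper does this by repeated use of Lemma \ref{le2,10}, you by an equitable partition --- as you note, the two routes give the same factors), apply Vieta to the cubic factor $x^3-(n-t+3)x^2+(3n-3t+1)x-n$ as in Theorem \ref{th5,1}, and then optimize over $(s,t)$. Your unified closed form $BH=n^2-\frac{8ns}{9}+5nt-2n-16t+\frac{(3t-1)^2}{n}$ is exactly the paper's case-(5) expression $n^2-\frac{8}{9}sn-2n+\frac{9t^2+(5n^2-16n-6)t+1}{n}$, it specializes correctly to cases (1)--(4), and substituting the corner values of $(s,t)$ reproduces every stated bound. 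Where you deviate, you strengthen the paper on two genuine points. First, the paper treats the regimes separately, and its case-(3) factorization formally carries the factor $(x-1)^{n-2t-2}$ with \emph{negative} exponent at the extremal value $t=\frac{n-1}{2}$ (no pendant edges); this is harmless only because $x=1$ is then a root of the cubic, a point the paper never remarks on but you explicitly flag, and your device of reading the power sums off the coefficients of $\phi(L)/x$ bypasses it cleanly. Second, the paper nowhere argues that the named configurations are extremal --- in cases (3)--(5) it simply substitutes the claimed optimal $(s,t)$ --- which in the two-parameter case (5) is a real omission; your monotonicity argument ($\partial BH/\partial s=-\frac{8n}{9}<0$ and $\partial BH/\partial t=5n-16+\frac{6(3t-1)}{n}>0$ for $n\geq 7$), together with the observation that decreasing $t$ or increasing $s$ within $2s+2t+1\leq n$ preserves feasibility, reduces the problem to the corner points and, by strictness, also delivers the ``if and only if'' uniqueness claims. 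One small caution: your per-cell eigenvector inventory misses one eigenvalue-$1$ mode (a combination mixing the pendant-leaf cell with the triangle cell, needed to reach the multiplicity $n-s-2t-2$ appearing in the paper's factorization), but since your computation ultimately rests on the coefficient identities rather than on the mode count, this does not affect the result.
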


\begin{proof} (1) If $s=t=0$, then $F_{0,\, 0,\, n-1}\cong K_{1,\,n-1}$. Thus $BH(F_{0,\, 0,\, n-1})=n^2-2n+\frac{1}{n}$.

(2) If $s=0$ and $t=1$, by Lemma \ref{le2,10}, we have
\begin{eqnarray*}
\phi(L(F_{0,\,1,\,n-3})) & = & \phi(L(K_{1,\,n-3}))\phi(L(P_2))-(x-1)^{n-3}\phi(L(P_2))-(x-1)\phi(L(K_{1,\,n-3}))\\
& = & x^2(x-2)(x-n+2)(x-1)^{n-4}-x(x-2)(x-1)^{n-3}\\
& & -x(x-n+2)(x-1)^{n-3}\\
& = & x(x-1)^{n-4}[x^3-(n+2)x^2+(3n-2)x-n].
\end{eqnarray*}
By a similar reasoning as the proof of Theorem \ref{th5,1}, we have
$$BH(F_{0,\,1,\,n-3})=n^2+3n-16+\frac{4}{n}.$$

(3) If $s=0$ and $t\geq 2$, then we have
$$\phi(L(F_{0,\,t,\,n-2t-1}))=x(x-1)^{n-2t-2}(x^2-3x+1)^{t-1}[x^3-(n-t+3)x^2+(3n-3t+1)x-n].$$
By a similar reasoning as the proof of Theorem \ref{th5,1}, we have
\begin{eqnarray*}
BH(F_{0,\,t,\,n-2t-1}) & = & n^2+5tn-11n+2t+\frac{(3n-3t+1)^2}{n}-6\\
& = & n^2-2n+\frac{9t^2+(5n^2-16n-6)t+1}{n}.
\end{eqnarray*}
If $2\leq t\leq \frac{n-1}{2}$ for odd $n$, we have
$$n^2+8n+\frac{25}{n}-32\leq BH(F_{0,\,t,\,n-2t-1})\leq \frac{7n^2 }{2}-\frac{41n}{4}+\frac{25}{4n}+\frac{1}{2},$$
the left (right) equality holds if and only if  $F_{0,\,t,\,n-2t-1}=F_{0,\,2,\,n-5}$ ($F_{0,\,t,\,n-2t-1}=F_{0,\, \frac{n-1}{2},\,0}$).
If $2\leq t\leq \frac{n-2}{2}$ for even $n$, we have
$$n^2+8n+\frac{25}{n}-32\leq BH(F_{0,\,t,\,n-2t-1})\leq \frac{7n^2}{2}-\frac{51n}{4}+\frac{16}{n}+4,$$
the left (right) equality holds if and only if  $F_{0,\,t,\,n-2t-1}=F_{0,\,2,\,n-5}$ ($F_{0,\,t,\,n-2t-1}=F_{0,\, \frac{n-2}{2},\,1}$).

(4) If $s\geq 1$ and $t=0$, by Lemma \ref{le2,10}, we have
$$\phi(L(F_{s,\,0,\,n-2s-1}))=x(x-n)(x-3)^s(x-1)^{n-s-2}.$$
Thus
$$BH(F_{s,\,0,\,n-2s-1})=n^2-\frac{8}{9}sn-2n+\frac{1}{n}.$$
If $1\leq s\leq \frac{n-1}{2}$ for odd $n$, we have
$$\frac{5n^2 }{9}-\frac{14n}{9}+\frac{1}{n}\leq BH(F_{0,\,t,\,n-2t-1})\leq n^2-\frac{26}{9}n+\frac{1}{n},$$
the left (right) equality holds if and only if $F_{s,\,0,\,n-2s-1}=F_{1,\,0,\,n-3}$ ($F_{s,\,0,\,n-2s-1}=F_{\frac{n-1}{2},\,0,\,0}$).
If $1\leq s\leq \frac{n-2}{2}$ for even $n$, we have
$$\frac{5n^2}{9}-\frac{10n}{9}+\frac{1}{n}\leq BH(F_{0,\,t,\,n-2t-1})\leq n^2-\frac{26}{9}n+\frac{1}{n},$$
the left (right) equality holds if and only if $F_{s,\,0,\,n-2s-1}=F_{1,\,0,\,n-3}$ ($F_{s,\,0,\,n-2s-1}=F_{\frac{n-2}{2},\,0,\,1}$).

(5) If $s\geq 1$ and $t\geq 1$, by Lemma \ref{le2,10}, we have
\begin{eqnarray*}
\phi(L(F_{s,\,t,\,n-2s-2t-1})) & = & x(x-3)^{s}(x-1)^{n-s-2t-2}(x^2-3x+1)^{t-1}\\
& & [x^3-(n-t+3)x^2+(3n-3t+1)x-n].
\end{eqnarray*}
By a similar reasoning as the proof of Theorem \ref{th5,1}, we have
$$BH(L(F_{s,\,t,\,n-2s-2t-1}))=n^2-\frac{8}{9}sn-2n+\frac{9t^2+(5n^2-16n-6)t+1}{n}.$$
If $s=1$ and $t=\frac{n-3}{2}$ for odd $n$, we have
$$BH(F_{s,\,t,\,n-2s-2t-1})_{\max}=\frac{7n^2}{2}-\frac{581n}{36}+\frac{121}{4n}+\frac{15}{2},$$
the equality holds if and only if $F_{s,\,t,\,n-2s-2t-1}=F_{1,\,\frac{n-3}{2},\,0}$.\\
If $s=1$ and $t=\frac{n-4}{2}$ for even $n$, we have
$$BH(F_{s,\,t,\,n-2s-2t-1})_{\max}=\frac{7n^2}{2}-\frac{671n}{36}+\frac{49}{n}+11,$$
the equality holds if and only if $F_{s,\,t,\,n-2s-2t-1}=F_{1,\,\frac{n-4}{2},\,1}$.\\
If $s=\frac{n-3}{2}$ and $t=1$ for odd $n$, we have
$$BH(F_{s,\,t,\,n-2s-2t-1})_{\min}=\frac{5n^2}{9}+\frac{13n}{3}+\frac{4}{n}-16,$$
the equality holds if and only if $F_{s,\,t,\,n-2s-2t-1}=F_{\frac{n-3}{2},\,1,\,0}$.\\
If $s=\frac{n-4}{2}$ and $t=1$ for even $n$, we have
$$BH(F_{s,\,t,\,n-2s-2t-1})_{\min}=\frac{5n^2}{9}+\frac{43n}{9}+\frac{4}{n}-16,$$
the equality holds if and only if $F_{s,\,t,\,n-2s-2t-1}=F_{\frac{n-4}{2},\,1,\,1}$.

Combining the above arguments, we have the proof. $\Box$
\end{proof}

\section{\large  The biharmonic index and graph operations }

\begin{lemma}{\bf (\cite{M})}\label{le6,1} 
Let $G$ be a connected graph with $n$ vertices. Then $\lambda_i(\overline{G})=n-\lambda_{n+2-i}(G)$  for $i= 2, \ldots, n$.
\end{lemma}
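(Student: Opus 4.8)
The plan is to relate $L(\overline{G})$ to $L(G)$ through a single matrix identity and then read off the spectrum. The starting point is that for any graph on $n$ vertices the adjacency matrices satisfy $A(G)+A(\overline{G})=J-I_n$, where $J$ is the all-ones matrix, since together $G$ and $\overline{G}$ account for exactly the edges of $K_n$. Simultaneously each vertex $v$ obeys $d_G(v)+d_{\overline{G}}(v)=n-1$, so the degree matrices satisfy $D(G)+D(\overline{G})=(n-1)I_n$. Subtracting these gives the key identity
$$L(G)+L(\overline{G})=(n-1)I_n-(J-I_n)=nI_n-J,$$
so that $L(\overline{G})=nI_n-J-L(G)$.

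Next I would exploit the common eigenvector $\mathbf{1}$. Since $G$ is connected, $L(G)$ is symmetric positive semidefinite with kernel spanned by the all-ones vector $\mathbf{1}$; thus $\lambda_1(G)=0$ and we may fix an orthonormal eigenbasis $\mathbf{1}/\sqrt{n},\, v_2,\ldots,v_n$ of $L(G)$, where $L(G)v_i=\lambda_i(G)v_i$ and $v_i\perp\mathbf{1}$ for $2\le i\le n$. Because $v_i\perp\mathbf{1}$ we have $Jv_i=0$, and therefore
$$L(\overline{G})v_i=nv_i-Jv_i-L(G)v_i=(n-\lambda_i(G))v_i.$$
Moreover $L(\overline{G})\mathbf{1}=n\mathbf{1}-n\mathbf{1}-0=0$. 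Hence the same orthonormal basis diagonalizes $L(\overline{G})$, and its eigenvalues are precisely the $n$ numbers $0$ together with $n-\lambda_i(G)$ for $2\le i\le n$.

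Finally I would sort this multiset to match indices. As $\lambda_2(G)\le\cdots\le\lambda_n(G)$, the quantities $n-\lambda_i(G)$ decrease in $i$, so in ascending order they read $n-\lambda_n(G)\le\cdots\le n-\lambda_2(G)$; invoking the standard bound $\lambda_n(G)\le n$ shows these are all nonnegative, so $0$ remains the smallest eigenvalue and equals $\lambda_1(\overline{G})$. Reading off the sorted list gives $\lambda_2(\overline{G})=n-\lambda_n(G)$, $\lambda_3(\overline{G})=n-\lambda_{n-1}(G)$, and in general $\lambda_i(\overline{G})=n-\lambda_{n+2-i}(G)$ for $2\le i\le n$, which is exactly the claim.

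The computation is entirely elementary, so I do not anticipate a genuine obstacle; the only point requiring care is the bookkeeping of the ordering. The map $\mu\mapsto n-\mu$ reverses the order of the nonzero eigenvalues, and it is precisely this reversal that turns the index $i$ into $n+2-i$. Getting that correspondence right — rather than the computation of the identity or the eigenvectors — is where a careless proof would slip.
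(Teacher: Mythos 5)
Your proof is correct. The paper offers no proof of this lemma at all --- it is quoted from Merris's survey (reference \cite{M}) as a known fact --- and your argument, via the identity $L(G)+L(\overline{G})=nI_n-J$ together with the common eigenbasis orthogonal to $\mathbf{1}$ and the order-reversing relabeling (justified by $\lambda_n(G)\le n$), is exactly the standard proof of that cited result, with the index bookkeeping handled correctly.
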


\begin{theorem}\label{th6,1} 
Let $G$ be a connected graph with $n$ vertices. If $\overline{G}$ is a connected graph, then
$$BH(\overline{G})= n\sum\limits_{i=2}^{n}\frac{1}{(n-\lambda_{n+2-i}(G))^2}.$$
\end{theorem}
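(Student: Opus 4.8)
The plan is to obtain the identity as a direct substitution, combining the defining formula for the biharmonic index with the spectral relation between a graph and its complement supplied by Lemma~\ref{le6,1}. No auxiliary inequalities or case analysis are needed here; the content of the statement is entirely carried by Lemma~\ref{le6,1}, and the theorem merely packages that relation into the biharmonic index.

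First I would note that since $\overline{G}$ is a connected graph on $n$ vertices, the definition of the biharmonic index applies verbatim to $\overline{G}$, giving
$$BH(\overline{G})=n\sum_{i=2}^{n}\frac{1}{\lambda_i^2(\overline{G})}.$$
Connectedness of $\overline{G}$ ensures $\lambda_2(\overline{G})>0$, so each Laplacian eigenvalue appearing in the sum is strictly positive and every term is well-defined. Next I would invoke Lemma~\ref{le6,1}, which states $\lambda_i(\overline{G})=n-\lambda_{n+2-i}(G)$ for every $i\in\{2,\ldots,n\}$, and substitute this expression into each summand to arrive at
$$BH(\overline{G})=n\sum_{i=2}^{n}\frac{1}{(n-\lambda_{n+2-i}(G))^2},$$
which is exactly the claimed formula.

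The one point worth checking, rather than an obstacle, is that the reindexing is clean: the map $i\mapsto n+2-i$ is an involution of the index set $\{2,\ldots,n\}$ (sending $2\mapsto n$ and $n\mapsto 2$), so Lemma~\ref{le6,1} supplies precisely the eigenvalues $\lambda_i(\overline{G})$ for $i=2,\ldots,n$ that occur in the sum, with no boundary term left over or missing. Beyond this bookkeeping the argument is immediate, so I expect the proof to be a single substitution step with no genuine difficulty.
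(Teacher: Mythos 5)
Your proof is correct and follows exactly the paper's own route: the paper proves Theorem~\ref{th6,1} by a one-line appeal to Lemma~\ref{le6,1}, which is precisely your substitution of $\lambda_i(\overline{G})=n-\lambda_{n+2-i}(G)$ into the definition $BH(\overline{G})=n\sum_{i=2}^{n}1/\lambda_i^2(\overline{G})$. Your additional remarks on connectedness of $\overline{G}$ and on the reindexing being an involution are harmless elaborations of the same argument.
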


\begin{proof}
By Lemma \ref{le6,1}, we have the proof. $\Box$
\end{proof}

The union of two graphs $G_1$ and $G_2$ is the graph $G_1\cup G_2$ with vertex set $V_1(G)\cup V_2(G)$ and edge set $E(G_1)\cup E(G_2)$. The join $G_1\vee G_2$ is obtained from $G_1\cup G_2$ by adding to it all edges between vertices from $V(G_1)$ and $V(G_2)$.

\begin{lemma}{\bf (\cite{M1})}\label{le6,2} 
Let $G_1$ and $G_2$ be graphs on $n_1$ and $n_2$
vertices, respectively. Then the Laplacian eigenvalues of $G_1\vee G_2$ are $n_1+n_2$, $\lambda_i(G_1)+n_2$ ($2\leq i\leq n_1$) and $\lambda_j(G_2)+n_1$ ($2\leq j\leq n_1$).
\end{lemma}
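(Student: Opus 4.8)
The plan is to diagonalize $L(G_1\vee G_2)$ directly by exploiting its $2\times 2$ block structure and exhibiting an explicit orthogonal eigenbasis. Writing the vertex set of $G_1\vee G_2$ as $V(G_1)\cup V(G_2)$, every vertex of $G_1$ acquires $n_2$ new neighbours and every vertex of $G_2$ acquires $n_1$, while the two parts are linked by a complete bipartite block. Hence
$$L(G_1\vee G_2)=\begin{pmatrix} L(G_1)+n_2 I_{n_1} & -J_{n_1\times n_2}\\ -J_{n_2\times n_1} & L(G_2)+n_1 I_{n_2}\end{pmatrix},$$
where $J$ denotes an all-ones matrix. The whole argument rests on the symmetry of $L(G_1)$ and $L(G_2)$: each admits an orthonormal eigenbasis in which the eigenvalue $0$ is realized by the constant vector $\mathbf{1}_{n_i}$ and every remaining eigenvector is orthogonal to $\mathbf{1}_{n_i}$.

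First I would lift the nonconstant eigenvectors of the two factors. Fix an orthonormal eigenbasis $\mathbf{x}_2,\ldots,\mathbf{x}_{n_1}$ of $L(G_1)$, orthogonal to $\mathbf{1}_{n_1}$, with $L(G_1)\mathbf{x}_i=\lambda_i(G_1)\mathbf{x}_i$. Padding with zeros produces the candidate vectors $(\mathbf{x}_i;\mathbf{0})$, and the crucial cancellation is $J_{n_2\times n_1}\mathbf{x}_i=\mathbf{1}_{n_2}(\mathbf{1}_{n_1}^{\top}\mathbf{x}_i)=\mathbf{0}$, so the off-diagonal block contributes nothing and
$$L(G_1\vee G_2)\begin{pmatrix}\mathbf{x}_i\\\mathbf{0}\end{pmatrix}=\begin{pmatrix}(\lambda_i(G_1)+n_2)\,\mathbf{x}_i\\\mathbf{0}\end{pmatrix}.$$
Thus each such vector is an eigenvector with eigenvalue $\lambda_i(G_1)+n_2$ for $2\le i\le n_1$; symmetrically, zero-padded eigenvectors of $L(G_2)$ orthogonal to $\mathbf{1}_{n_2}$ yield the eigenvalues $\lambda_j(G_2)+n_1$ for $2\le j\le n_2$.

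It remains to treat the two dimensions spanned by the part-constant vectors $(\mathbf{1}_{n_1};\mathbf{0})$ and $(\mathbf{0};\mathbf{1}_{n_2})$, a subspace that is invariant under $L(G_1\vee G_2)$ because $L(G_i)\mathbf{1}_{n_i}=\mathbf{0}$. On this subspace the Laplacian acts, in the coordinates $(\alpha,\beta)$ of $(\alpha\mathbf{1}_{n_1};\beta\mathbf{1}_{n_2})$, as the $2\times 2$ matrix $\left(\begin{smallmatrix} n_2 & -n_2\\ -n_1 & n_1\end{smallmatrix}\right)$, whose trace is $n_1+n_2$ and determinant is $0$; its eigenvalues are therefore $0$, realized by the global all-ones vector $\mathbf{1}_{n_1+n_2}$, and $n_1+n_2$. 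Counting yields $1+1+(n_1-1)+(n_2-1)=n_1+n_2$ eigenvalues, and since the listed vectors are pairwise orthogonal they constitute a complete eigenbasis, which proves the claim.

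The individual computations here are entirely routine; the only points demanding care are the vanishing of the cross-block terms, which is precisely where orthogonality to the all-ones vectors is invoked, and the completeness bookkeeping confirming that no eigenvalue is missed or double-counted. I would also remark that the index range stated for the $G_2$-eigenvalues should read $2\le j\le n_2$ rather than $2\le j\le n_1$.
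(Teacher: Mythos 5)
Your proof is correct, but note that there is nothing in the paper to compare it against: Lemma \ref{le6,2} is stated without proof, quoted as a known result of Merris \cite{M1}. The derivation that fits most economically into this paper's own toolkit would go through the complement identity $G_1\vee G_2=\overline{\overline{G_1}\cup\overline{G_2}}$: the Laplacian spectrum of a disjoint union is the multiset union of the spectra, and the complement rule of Lemma \ref{le6,1} then turns the eigenvalues $0$, $0$, $n_1-\lambda_i(G_1)$ $(2\le i\le n_1)$, $n_2-\lambda_j(G_2)$ $(2\le j\le n_2)$ of $\overline{G_1}\cup\overline{G_2}$ into $0$, $n_1+n_2$, $\lambda_i(G_1)+n_2$, $\lambda_j(G_2)+n_1$. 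Your block-matrix diagonalization is a genuinely different and fully self-contained route: it avoids the complement machinery entirely and has the added benefit of producing explicit eigenvectors, which the complement argument does not. Your bookkeeping is sound at the two delicate points: the cross-block cancellation $J_{n_2\times n_1}\mathbf{x}_i=\mathbf{0}$ is exactly where orthogonality to $\mathbf{1}_{n_1}$ is used, and the restriction to the part-constant subspace, with trace $n_1+n_2$ and determinant $0$, together with the count $(n_1-1)+(n_2-1)+2=n_1+n_2$ of pairwise orthogonal eigenvectors, shows no eigenvalue is missed or double-counted. You are also right that the range ``$2\le j\le n_1$'' in the statement is a typo for $2\le j\le n_2$.
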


\begin{theorem}\label{th6,2} 
Let $G$ be a connected graph with $n$ vertices. Then
$$BH(G_1\vee G_2)= (n_1+n_2)\left(\frac{1}{(n_1+n_2)^2}+\sum\limits_{i=2}^{n_1}\frac{1}{(\lambda_i(G_1)+n_2)^2}+\sum\limits_{j=2}^{n_2}\frac{1}{(\lambda_j(G_2)+n_1)^2}\right).$$
\end{theorem}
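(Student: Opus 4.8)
The plan is to apply the definition of the biharmonic index directly, feeding into it the explicit Laplacian spectrum of the join supplied by Lemma~\ref{le6,2}. There is essentially no analytic content: the proof is a bookkeeping translation of the join spectrum into the reciprocal-square-sum formula, so the entire task is to identify the eigenvalues, excise the zero eigenvalue, and check that the index ranges account for every vertex.

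First I would observe that $G_1\vee G_2$ is connected on $n_1+n_2$ vertices, since by construction every vertex of $G_1$ is adjacent to every vertex of $G_2$; hence $BH(G_1\vee G_2)$ is well defined and, by the defining formula, equals $(n_1+n_2)\sum_{i=2}^{n_1+n_2}\frac{1}{\lambda_i^2(G_1\vee G_2)}$, where the summation ranges over the $n_1+n_2-1$ nonzero Laplacian eigenvalues. Next I would invoke Lemma~\ref{le6,2} to write down the spectrum explicitly. The smallest eigenvalue is $\lambda_1=0$, which is correctly omitted from the sum starting at $i=2$. The remaining eigenvalues are the single value $n_1+n_2$, the values $\lambda_i(G_1)+n_2$ for $2\leq i\leq n_1$, and the values $\lambda_j(G_2)+n_1$ for $2\leq j\leq n_2$.

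A count then verifies that these account for exactly $1+(n_1-1)+(n_2-1)=n_1+n_2-1$ eigenvalues, matching the number of nonzero Laplacian eigenvalues of an $(n_1+n_2)$-vertex connected graph. I would also note in passing that every listed eigenvalue is strictly positive: $n_1+n_2>0$, while $\lambda_i(G_1)+n_2\geq n_2>0$ and $\lambda_j(G_2)+n_1\geq n_1>0$, so the reciprocal squares are all finite. Substituting this list directly into the definition gives
$$BH(G_1\vee G_2)=(n_1+n_2)\left(\frac{1}{(n_1+n_2)^2}+\sum\limits_{i=2}^{n_1}\frac{1}{(\lambda_i(G_1)+n_2)^2}+\sum\limits_{j=2}^{n_2}\frac{1}{(\lambda_j(G_2)+n_1)^2}\right),$$
which is the claimed identity.

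The only point demanding any care — and it is a minor one rather than a true obstacle — is making sure the upper summation limits in Lemma~\ref{le6,2} are read correctly: the eigenvalues contributed by $G_2$ are indexed by $2\leq j\leq n_2$, not $2\leq j\leq n_1$ as a literal reading of the lemma's typographical statement might suggest. With that index range fixed, the vertex count closes perfectly and the formula follows with no further computation.
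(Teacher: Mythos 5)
Your proof is correct and takes essentially the same route as the paper: the paper's entire proof of this theorem is the one-line appeal to Lemma~\ref{le6,2}, and you have simply made the bookkeeping explicit (excising the zero eigenvalue, counting $1+(n_1-1)+(n_2-1)=n_1+n_2-1$ nonzero eigenvalues, and correcting the lemma's typographical index range to $2\leq j\leq n_2$). Nothing further is needed.
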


\begin{proof}
By Lemma \ref{le6,2}, we have the proof. $\Box$
\end{proof}

The Cartesian product of $G_1$ and $G_2$ is the graph $G_1\Box G_2$, whose vertex set is $V=V_1\times V_2$ and
where two vertices $(u_i,v_s)$ and $(u_j,v_t)$ are adjacent if and only if either $u_i=u_j$ and $v_sv_t\in E(G_2)$ or $v_s=v_t$ and $u_iu_j\in E(G_1)$.

\begin{lemma}{\bf (\cite{F, M})}\label{le6,3} 
Let $G_1$ and $G_2$ be graphs on $n_1$ and $n_2$
vertices, respectively. Then the Laplacian eigenvalues of $G_1\Box G_2$ are all possible
sums $\lambda_i(G_1)+\lambda_j(G_2)$, $1\leq i \leq n_1$ and $1\leq j \leq n_2$.
\end{lemma}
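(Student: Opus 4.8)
The plan is to realize $L(G_1\Box G_2)$ as a Kronecker sum of the two factor Laplacians and then exploit the multiplicative behaviour of eigenvectors under the tensor product. Order the vertex set $V_1\times V_2$ lexicographically so that matrices indexed by it decompose into $n_1\times n_1$ blocks of size $n_2$.

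First I would establish the structural identity
$$L(G_1\Box G_2)=L(G_1)\otimes I_{n_2}+I_{n_1}\otimes L(G_2),$$
where $\otimes$ denotes the Kronecker product and $I_n$ the $n\times n$ identity. This is verified directly from the definition of the Cartesian product: a vertex $(u_i,v_s)$ has degree $d_{G_1}(u_i)+d_{G_2}(v_s)$, so that $D(G_1\Box G_2)=D(G_1)\otimes I_{n_2}+I_{n_1}\otimes D(G_2)$; and $(u_i,v_s)$ is adjacent to $(u_j,v_t)$ precisely when either $u_i=u_j$ and $v_sv_t\in E(G_2)$ or $v_s=v_t$ and $u_iu_j\in E(G_1)$, which yields $A(G_1\Box G_2)=A(G_1)\otimes I_{n_2}+I_{n_1}\otimes A(G_2)$. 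Subtracting the adjacency identity from the degree identity gives the claimed Kronecker-sum form.

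Next I would invoke the spectral theorem: since $L(G_1)$ and $L(G_2)$ are real symmetric, there exist orthonormal bases $\{x_i\}_{i=1}^{n_1}$ and $\{y_j\}_{j=1}^{n_2}$ with $L(G_1)x_i=\lambda_i(G_1)x_i$ and $L(G_2)y_j=\lambda_j(G_2)y_j$. Using the mixed-product rule $(P\otimes Q)(x\otimes y)=(Px)\otimes(Qy)$, a short computation gives
$$\bigl(L(G_1)\otimes I_{n_2}+I_{n_1}\otimes L(G_2)\bigr)(x_i\otimes y_j)=\bigl(\lambda_i(G_1)+\lambda_j(G_2)\bigr)(x_i\otimes y_j),$$
so each $x_i\otimes y_j$ is an eigenvector of $L(G_1\Box G_2)$ with eigenvalue $\lambda_i(G_1)+\lambda_j(G_2)$.

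Finally I would observe that the $n_1n_2$ vectors $\{x_i\otimes y_j\}$ are orthonormal in $\mathbb{R}^{n_1n_2}$, because $\langle x_i\otimes y_j,\,x_k\otimes y_l\rangle=\langle x_i,x_k\rangle\langle y_j,y_l\rangle$. Hence they constitute a complete eigenbasis, and the multiset of sums $\lambda_i(G_1)+\lambda_j(G_2)$ over all $1\le i\le n_1$ and $1\le j\le n_2$ exhausts the spectrum of $L(G_1\Box G_2)$ with the correct multiplicities. The only genuinely delicate point is the bookkeeping in the first step, namely matching the block structure of the Kronecker product to the chosen ordering of $V_1\times V_2$; once the identity $L(G_1\Box G_2)=L(G_1)\otimes I_{n_2}+I_{n_1}\otimes L(G_2)$ is secured, the remainder is the standard tensor-eigenvector argument and completes the proof.
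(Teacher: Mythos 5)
Your proof is correct. Note that the paper itself does not prove this lemma at all: it is quoted as a known result, with the citation pointing to Fiedler and to Merris's survey, and the paper immediately uses it to compute $BH(G_1\Box G_2)$ in Theorem \ref{th6,3}. Your argument --- writing $L(G_1\Box G_2)=L(G_1)\otimes I_{n_2}+I_{n_1}\otimes L(G_2)$ under the lexicographic ordering, applying the mixed-product rule to get the eigenvectors $x_i\otimes y_j$ with eigenvalues $\lambda_i(G_1)+\lambda_j(G_2)$, and checking that these $n_1n_2$ tensor eigenvectors are orthonormal so that the multiplicities are accounted for --- is precisely the standard proof underlying those references, and it is complete: the Kronecker-sum identity, the eigenvector computation, and the completeness/multiplicity step are all justified.
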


\begin{theorem}\label{th6,3} 
Let $G_1$ and $G_2$ be two connected graphs. Then
$$BH(G_1\Box G_2)= n_1n_2\left(\sum\limits_{i=2}^{n_1}\frac{1}{\lambda_i^2(G_1)}+\sum\limits_{j=2}^{n_2}\frac{1}{\lambda_j^2(G_2)}
+\sum\limits_{i=2}^{n_1}\sum\limits_{j=2}^{n_2}\frac{1}{(\lambda_i(G_1)+\lambda_j(G_2))^2}\right).$$
\end{theorem}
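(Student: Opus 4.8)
The plan is to apply Lemma~\ref{le6,3} directly and then sort the resulting eigenvalues according to which of them vanish. First I would observe that since $G_1$ and $G_2$ are connected, the Cartesian product $G_1\Box G_2$ is connected on $n_1n_2$ vertices, so its Laplacian has exactly one zero eigenvalue. By the definition of the biharmonic index, $BH(G_1\Box G_2)=n_1n_2\sum_{\mu\neq 0}\frac{1}{\mu^2}$, where the sum ranges over the nonzero Laplacian eigenvalues $\mu$ of $G_1\Box G_2$.

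By Lemma~\ref{le6,3}, these eigenvalues are precisely the $n_1n_2$ sums $\lambda_i(G_1)+\lambda_j(G_2)$ with $1\le i\le n_1$ and $1\le j\le n_2$. Using $\lambda_1(G_1)=\lambda_1(G_2)=0$, I would partition this multiset of index pairs $(i,j)$ into four disjoint cases according to whether each index equals $1$ or exceeds $1$. The pair $(i,j)=(1,1)$ yields the single zero eigenvalue, which is discarded. The pairs $(i,1)$ with $2\le i\le n_1$ yield $\lambda_i(G_1)$, contributing $\sum_{i=2}^{n_1}\frac{1}{\lambda_i^2(G_1)}$. Symmetrically, the pairs $(1,j)$ with $2\le j\le n_2$ yield $\lambda_j(G_2)$, contributing $\sum_{j=2}^{n_2}\frac{1}{\lambda_j^2(G_2)}$. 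Finally, the pairs $(i,j)$ with $2\le i\le n_1$ and $2\le j\le n_2$ contribute the double sum $\sum_{i=2}^{n_1}\sum_{j=2}^{n_2}\frac{1}{(\lambda_i(G_1)+\lambda_j(G_2))^2}$.

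Summing these three contributions and multiplying by $n_1n_2$ yields the claimed formula. The only point requiring care — and it is a minor bookkeeping matter rather than a genuine obstacle — is the accounting of the zero eigenvalue: one must use that each factor is connected, so that the cross terms $\lambda_i(G_1)+\lambda_j(G_2)$ with $i,j\ge 2$ are all strictly positive and that $(1,1)$ is the unique index pair producing $0$. No inequality or spectral estimate is needed here; the result follows directly from Lemma~\ref{le6,3} together with the definition of $BH$.
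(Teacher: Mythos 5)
Your proof is correct and takes the same route as the paper: the paper's own proof of this theorem is literally the one-line citation ``By Lemma~\ref{le6,3}, we have the proof,'' and your argument simply supplies the eigenvalue bookkeeping (isolating the unique zero eigenvalue at the index pair $(1,1)$ and splitting the remaining pairs into the three families) that the paper leaves implicit.
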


\begin{proof}
By Lemma \ref{le6,3}, we have the proof. $\Box$
\end{proof}

The lexicographic product $G_1[G_2]$, in which vertices $(u_i,v_s)$ and $(u_j,v_t)$ are adjacent if either $u_iu_j\in E(G_1)$
or $u_i=u_j$ and $v_sv_t\in E(G_2)$ (see \cite{BBP}).

\begin{lemma}{\bf (\cite{BBP})}\label{le6,4} 
Let $G_1$ and $G_2$ be graphs on $n_1$ and $n_2$
vertices, respectively. Then the Laplacian eigenvalues of $G_1[G_2]$ are $n_2\lambda_i(G_1)$ and $\lambda_j(G_2)+d(u_i)n_2$,
where $d(u_i)$ is vertex degree of $G_1$, $1\leq i \leq n_1$ and $2\leq j \leq n_2$.
\end{lemma}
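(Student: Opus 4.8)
The plan is to realize $L(G_1[G_2])$ explicitly as a sum of Kronecker products and then diagonalize it by exhibiting two families of eigenvectors. First I would write down the adjacency matrix. Ordering the vertices so that each copy of $G_2$ occupies a consecutive block, the rule ``$u_iu_j\in E(G_1)$, for all $v_s,v_t$'' contributes $A(G_1)\otimes J_{n_2}$, where $J_{n_2}$ is the all-ones matrix, while the rule ``$u_i=u_j$ and $v_sv_t\in E(G_2)$'' contributes $I_{n_1}\otimes A(G_2)$. The degree of $(u_i,v_s)$ equals $n_2\,d(u_i)+d_{G_2}(v_s)$, so $D(G_1[G_2])=n_2 D(G_1)\otimes I_{n_2}+I_{n_1}\otimes D(G_2)$. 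Subtracting gives
$$L(G_1[G_2])=n_2 D(G_1)\otimes I_{n_2}-A(G_1)\otimes J_{n_2}+I_{n_1}\otimes L(G_2).$$

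Next I would feed this matrix the two natural families of product vectors, using the spectral decomposition of $J_{n_2}$: it acts as multiplication by $n_2$ on the all-ones vector $\mathbf{1}_{n_2}$ and as $0$ on $\mathbf{1}_{n_2}^{\perp}$. For the first family, take $y=\mathbf{1}_{n_2}$ and let $x$ be an eigenvector of $L(G_1)$ with eigenvalue $\lambda_i(G_1)$. Since $L(G_2)\mathbf{1}_{n_2}=0$ and $J_{n_2}\mathbf{1}_{n_2}=n_2\mathbf{1}_{n_2}$, the cross term $A(G_1)\otimes J_{n_2}$ combines with $n_2 D(G_1)\otimes I_{n_2}$ to produce exactly $n_2 L(G_1)$ acting on $x$, so $x\otimes\mathbf{1}_{n_2}$ is an eigenvector with eigenvalue $n_2\lambda_i(G_1)$; this yields the $n_1$ eigenvalues $n_2\lambda_i(G_1)$ with $1\le i\le n_1$.

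For the second family, take $y\in\mathbf{1}_{n_2}^{\perp}$ to be an eigenvector of $L(G_2)$ with eigenvalue $\lambda_j(G_2)$ ($2\le j\le n_2$). Now $J_{n_2}y=0$, so the coupling through $A(G_1)$ disappears entirely and the surviving operator is $n_2 D(G_1)\otimes I_{n_2}+I_{n_1}\otimes L(G_2)$, which acts diagonally in the $G_1$-factor. Hence I take $x=e_i$, a standard basis vector (an eigenvector of the diagonal matrix $D(G_1)$ with eigenvalue $d(u_i)$), and obtain that $e_i\otimes y$ is an eigenvector with eigenvalue $\lambda_j(G_2)+d(u_i)n_2$; this yields the $n_1(n_2-1)$ eigenvalues $\lambda_j(G_2)+d(u_i)n_2$.

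Finally I would argue completeness. The two families live in the complementary subspaces $\mathbb{R}^{n_1}\otimes\langle\mathbf{1}_{n_2}\rangle$ and $\mathbb{R}^{n_1}\otimes\mathbf{1}_{n_2}^{\perp}$, whose dimensions $n_1$ and $n_1(n_2-1)$ sum to $n_1n_2$; within each subspace the listed vectors form a basis (eigenvectors of $L(G_1)$ tensored with $\mathbf{1}_{n_2}$ in the first case, and $\{e_i\}$ tensored with a basis of $\mathbf{1}_{n_2}^{\perp}$ consisting of $L(G_2)$-eigenvectors in the second). Thus the $n_1n_2$ exhibited eigenpairs account for the entire spectrum. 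The one genuinely non-routine point, and the step I would treat most carefully, is the asymmetry between the two factors: unlike the Cartesian product, the $G_1$-adjacency enters through the full block $A(G_1)\otimes J_{n_2}$, so on $\mathbf{1}_{n_2}^{\perp}$ the $G_1$-coupling vanishes and it is the degree matrix $D(G_1)$, not the Laplacian $L(G_1)$, that governs the second family; this is precisely why $d(u_i)$ rather than $\lambda_i(G_1)$ appears in the second set of eigenvalues.
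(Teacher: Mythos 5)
Your proposal is correct and complete. There is, however, nothing in the paper to compare it against: the paper states this lemma as a quoted result from the reference of Barik, Bapat and Pati and supplies no proof of its own, so your argument is a genuinely self-contained verification of an imported fact. The route you take --- writing $L(G_1[G_2])=n_2D(G_1)\otimes I_{n_2}-A(G_1)\otimes J_{n_2}+I_{n_1}\otimes L(G_2)$ and then splitting $\mathbb{R}^{n_1n_2}$ into the invariant subspaces $\mathbb{R}^{n_1}\otimes\langle\mathbf{1}_{n_2}\rangle$ and $\mathbb{R}^{n_1}\otimes\mathbf{1}_{n_2}^{\perp}$ --- is the standard one and is essentially how the cited reference proceeds. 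Your degree computation, both eigenvector families, and the dimension count $n_1+n_1(n_2-1)=n_1n_2$ certifying completeness are all correct, and you rightly isolate the key structural point: on $\mathbf{1}_{n_2}^{\perp}$ the $G_1$-coupling term $A(G_1)\otimes J_{n_2}$ annihilates, so the second family is governed by $D(G_1)$ rather than $L(G_1)$. One small point worth making explicit: the second family requires a basis of $\mathbf{1}_{n_2}^{\perp}$ consisting of $L(G_2)$-eigenvectors, which exists because $L(G_2)$ is real symmetric and $\mathbf{1}_{n_2}^{\perp}$ is $L(G_2)$-invariant; this holds even when $G_2$ is disconnected, which matters since the lemma does not assume $G_2$ connected.
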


\begin{theorem}\label{th6,4} 
Let $G_1$ and $G_2$ be connected graphs on $n_1$ and $n_2$ vertices, respectively.
$$BH(G_1[G_2])=n_1n_2\left(\sum\limits_{i=2}^{n_1}\frac{1}{n_2^2\lambda_i^2(G_1)}
+\sum\limits_{j=2}^{n_2}\sum\limits_{i=1}^{n_1}\frac{1}{(\lambda_j(G_2)+d_{G_1}(u_i)n_2)^2}\right).$$

\end{theorem}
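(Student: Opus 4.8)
The plan is to apply Lemma \ref{le6,4} to obtain the entire Laplacian spectrum of $G_1[G_2]$, isolate the single zero eigenvalue, and substitute the remaining eigenvalues directly into the defining formula, using that $G_1[G_2]$ has $n_1n_2$ vertices so that $BH(G_1[G_2])=n_1n_2\sum_{\lambda\neq 0}\frac{1}{\lambda^2}$.

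First I would record the complete list of eigenvalues furnished by Lemma \ref{le6,4}: the $n_1$ values $n_2\lambda_i(G_1)$ for $1\le i\le n_1$, together with the $n_1(n_2-1)$ values $\lambda_j(G_2)+d_{G_1}(u_i)n_2$ for $1\le i\le n_1$ and $2\le j\le n_2$. A count confirms these total $n_1+n_1(n_2-1)=n_1n_2$, matching the order of $G_1[G_2]$, so we indeed have the full spectrum.

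Next I would identify the unique zero eigenvalue. Since $G_1$ is connected, $\lambda_1(G_1)=0$ and $\lambda_i(G_1)>0$ for $i\ge 2$, so the only vanishing member of the first family is $n_2\lambda_1(G_1)=0$. Since $G_2$ is connected, $\lambda_j(G_2)>0$ for $j\ge 2$, whence every second-family term $\lambda_j(G_2)+d_{G_1}(u_i)n_2$ is strictly positive. Thus $G_1[G_2]$ carries exactly one zero Laplacian eigenvalue, in agreement with its connectedness, and the summation defining $BH$ runs over all the remaining eigenvalues listed above.

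Finally, summing the reciprocals of the squares of the nonzero eigenvalues and multiplying by $n_1n_2$ gives
$$BH(G_1[G_2])=n_1n_2\left(\sum_{i=2}^{n_1}\frac{1}{(n_2\lambda_i(G_1))^2}+\sum_{i=1}^{n_1}\sum_{j=2}^{n_2}\frac{1}{(\lambda_j(G_2)+d_{G_1}(u_i)n_2)^2}\right),$$
and rewriting $(n_2\lambda_i(G_1))^2=n_2^2\lambda_i^2(G_1)$ in the first sum yields the stated identity. The computation is purely mechanical once the spectrum is in hand; the only step requiring genuine care is confirming that the zero eigenvalue occurs exactly once—equivalently, that no second-family term vanishes—which is precisely where the connectedness of $G_1$ and $G_2$ enters.
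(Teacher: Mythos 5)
Your proof is correct and follows exactly the route the paper takes: the paper's own proof is just the one-line appeal to Lemma \ref{le6,4}, and your argument fills in the same mechanical details (counting the $n_1n_2$ eigenvalues, isolating the unique zero eigenvalue $n_2\lambda_1(G_1)$, and substituting the nonzero ones into the defining formula). No discrepancy to report.
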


\begin{proof}
By Lemma \ref{le6,4}, we have the proof. $\Box$
\end{proof}

\section{\large Further work}

The biharmonic eccentricity $\varepsilon_b(u)$ of vertex $u$ in a connected graph $G$ is defined as $\varepsilon_b(u)=\max\{d_{B}^2(u,v)\mid v\in V(G)\}$. Let $d(u)$ be the degree of the corresponding vertex $u$. The following four topological indices will be the problems that need further exploration.\\
(1) The Schultz biharmonic index:
$$SBI(G)=\frac{1}{2}\sum\limits_{u\in V(G)}\sum\limits_{v\in V(G)}(d(u)+d(v))d_{B}^2(u,v).$$
(2) The Gutman biharmonic index:
$$GBI(G)=\frac{1}{2}\sum\limits_{u\in V(G)}\sum\limits_{v\in V(G)}(d(u)d(v))d_{B}^2(u,v).$$
(3) The eccentric biharmonic distance sum:
$$\xi_B(G)=\frac{1}{2}\sum\limits_{u\in V(G)}\sum\limits_{v\in V(G)}(\varepsilon_b(u)+\varepsilon_b(v))d_{B}^2(u,v).$$
(4) The multiplicative eccentricity biharmonic distance:
$$\xi_{B}^{*}(G)=\frac{1}{2}\sum\limits_{u\in V(G)}\sum\limits_{v\in V(G)}(\varepsilon_b(u)\varepsilon_b(v))d_{B}^2(u,v).$$

\small {

}


\begin{thebibliography}{99}


\bibitem{BBP} S. Barik, R.B. Bapat, S. Pati, On the Laplacian spectra of product graphs, Appl. Anal. Discrete Math. 9 (2015) 39-58.

\bibitem{BDFG} B. Borovi\'{c}anin, K.C. Das, B. Furtula, I. Gutman, Bounds for Zagreb indices, MATCH Commun. Math. Comput. Chem. 78 (2017) 17-100.

\bibitem{BP} M. Barahona, L.M. Pecora, Synchronization in small-world networks, Phys. Rev. Lett. 89 (2002) 054101.

\bibitem{D} M.V. Diudea, QSPR/QSAR Studies by molecular descriptors, Nova, Huntington, 2001.

\bibitem{D1} K.C. Das, Proof of conjectures involving algebraic connectivity of graphs, Linear Algebra Appl. 438 (2013) 3291-3302.

\bibitem{DB} J. Devillers, A.T. Balaban, Topological indices and related descriptors in QSAR and QSPR, Gordon \& Breach, Amsterdam, 1999.

\bibitem{F} M. Fiedler, Algebraic connectivity of graphs, Czechoslovak Math. J. 23 (1973) 298-305.

\bibitem{FG} B. Furtula, I. Gutman, A forgotten topological index, J. Math. Chem. 53 (2015) 1184-1190.

\bibitem{FT} G.H. Fath-Tabar, Old and new Zagreb indices of graphs, MATCH Commun. Math. Comput. Chem. 65 (2011) 79-84.

\bibitem{G} J. Guo, On the second largest Laplacian eigenvalue of trees, Linear Algebra Appl. 404 (2005) 251-261.

\bibitem{GBS} A. Ghosh, S. Boyd, A. Saberi, Minimizing effective resistance of a graph, SIAM Rev. 50 (2008) 37-66.

\bibitem{GKM} I. Gutman, S. Klav\v{z}ar, B. Mohar, Fifty years of the Wiener index, MATCH Commun. Math. Comput. Chem. 35 (1997) 1-159.

\bibitem{GM} R. Grone, R. Merris, The Laplacian spectrum of a graph II, SIAM J. Discrete Math. 7 (1994) 221-229.

\bibitem{GM1} I. Gutman, B. Mohar, The quasi-Wiener and the Kirchhoff indices coincide, J. Chem. Inf. Comput. Sci. 36 (1996) 982-985.

\bibitem{GMS} R. Grone, R. Merris, V.S. Sunder, The Laplacian spectral of graphs, SIAM J. Matrix Anal. Appl. 11 (1990) 218-239.

\bibitem{GSIJF} W. Gao, M.K. Siddiqui, M. Imran, M.K. Jamil, M.R. Farahani, Forgotten topological index of chemical structure in drugs, Saudi Pharm J. 24 (2016) 258-264.

\bibitem{GT} I. Gutman, N. Trinajsti\'{c}, Graph theory and molecular orbitals. Total $p$-electron energy of alternant hydrocarbons, Chem. Phys. Lett. 17 (1972) 535-538.

\bibitem{JOT} D.P. Jacobs, E.R. Oliveira, V. Trevisan, Most Laplacian eigenvalues of a tree are small, J. Combin. Theory Ser. B 146 (2021) 1-33.

\bibitem{KR} D.J. Klein, M. Randi\'{c}, Resistance distance, J. Math. Chem. 12 (1993) 81-95.

\bibitem{KST} M. Knor, R. \v{S}krekovski, A. Tepeh, Mathematical aspects of Wiener index, Ars Math. Contemp. 11 (2016) 327-352.

\bibitem{LGSBB} J. Li, J. Guo, W. Shiu, S. Bozkurt Altinda, D. Bozkurt, Bounding the sum of powers of normalized Laplacian eigenvalues of a graph, Appl. Math. Comput. 324 (2018) 82-92.

\bibitem{LMR} X. Li, R.N. Mohapatra, R.S. Rodriguez, Gr\"{u}ss-type inequalities, J. Math. Anal. Appl. 267 (2002) 434-443.

\bibitem{LP} J. Li, Y. Pan, A note on the second largest eigenvalue of the Laplacian matrix of a graph, Linear Multilinear Algebra 48 (2000) 117-121.

\bibitem{LRF} Y. Lipman, R.M. Rustamov, T.A. Funkhouser, Biharmonic distance, ACM Transactions on Graphics 29 (2010) 1-11.

\bibitem{M} R. Merris, Laplacian matrices of graphs: a survey, Linear Algebra Appl. 197-198 (1994) 143-176.

\bibitem{M1} R. Merris, Laplacian graph eigenvectors, Linear Algebra Appl. 278 (1998) 221-236.

\bibitem{MKGD} L.A.J. M\"{u}ller, K.G. Kugler, A. Graber, M. Dehmer, A network-based approach to classify the three domains of life, Biol. Direct 6 (2011) 53.

\bibitem{MWL} A. Mehler, P. Wei, A. L\"{u}cking, A network model of interpersonal alignment, Entropy 12 (2010) 1440-1483.

\bibitem{O} M.R. Oboudi, A new lower bound for the energy of graphs, Linear Algebra Appl. 580 (2019) 384-395.

\bibitem{PLC} T. Puzyn, J. Leszczynski, M.T.D. Cronin, Recent advances in QSAR studies-methods and applications, Springer, Dordrecht, 2010.

\bibitem{PS} G. P\'{o}lya, G. Szeg\"{o}, Aufgaben und Lehrs\"{u}tze ans der analysis, Band I: Reihen, Integralrechnung, Funktiontheorie (in German), 4th Ed., Springer Verlag, Berlin, 1970.

\bibitem{R} J. Radon, \"{U}ber die absolut additiven mengenfunktionen, Wiener-Sitzungsber. 122 (1913) 1295-1438.

\bibitem{W} H. Wiener, Structural determination of paraffin boiling point, J. Am. Chem. Soc. 69 (1947) 17-20.

\bibitem{WLY} Y. Wei, R. Li, W. Yang, Biharmonic distance of graphs, arXiv:2110.02656v1.





\end{thebibliography}
\end{document}